\DeclareMathOperator{\Br}{\mathsf{Br}}
\DeclareMathOperator{\RBr}{\mathsf{RBr}}
\DeclareMathOperator{\URBr}{\mathsf{uRBr}}
\DeclareMathOperator{\id}{\mathsf{id}}
\DeclareMathOperator{\G}{\mathsf{G}}
\def\Ofrak{\mathfrak{O}}
\def\C{\mathsf{D}}
\def\fr{\mathrm{fr}}
\def\OO{\mathsf{O}}
\def\EE{\mathsf{E}}
\def\lra{\longrightarrow}
\def\op{\mathrm{op}}
\def\surj{\mathrm{surj}}
\def\inj{\mathrm{inj}}
\def\Set{\mathrm{Set}}
\def\I{\mathbb{I}}
 \newtheorem{thm}{Theorem}[section]
 \newenvironment{taggedtheorem}[1]
 {\taggedtheoremx}
 {\endtaggedtheoremx}
 \newtheorem{cor}[thm]{Corollary}
 \newtheorem{lem}[thm]{Lemma}
 \newtheorem{prop}[thm]{Proposition}
 \theoremstyle{definition}
 \newtheorem{defn}[thm]{Definition}
   \newtheorem{rem}[thm]{Remark}
    \newtheorem{notat}[thm]{Notation}
   \newtheorem{examp}[thm]{Example}
\newcommand{\brbinom}[2]{\genfrac{[}{]}{0pt}{}{#1}{#2}}
\numberwithin{equation}{section}
\begin{document}


\title{Cloning systems and action operads}

\author[J. Aramayona]{Javier Aramayona}
\address{Javier Aramayona: Instituto de Ciencias Matem\'aticas, ICMAT (CSIC-UAM-UC3M-UCM). Nicol\'as Cabrera, 13--15. 28049, Madrid, Spain}
\email{javier.aramayona@icmat.es}
\thanks{J.A. was supported by grant  PGC2018-101179-B-I00 and acknowledges
support from the Spanish Ministry of Science, Innovation, and Universities, through the Severo
Ochoa Programme for Centres of Excellence in R\&D (CEX2019-000904-S and CEX2023-001347-S).  F.C. was supported by grant SI3/PJI/2021-00505 from Comunidad de Madrid and PID2022-142024NB-I00. V.C. was partially supported by grants PID2020- 117971GB-C21 funded by MCIN/AEI/10.13039/501100011033, US-1263032 (US/JUNTA/FEDER, UE), P20 01109 (JUNTA/FEDER, UE)  and FPU17/01871. J.J.G. was supported by 
grants PID2020-117971GB-C22 and CEX2020-001084-M funded by MCIN/AEI/10.13039/501100011033 and grant 2021-SGR-00697 funded by the Catalan Government.}
\author[F. Cantero]{Federico Cantero}
\address{Federico Cantero: Departamento de Matemáticas, Universidad Autónoma de Madrid \& ICMAT. Calle Francisco Tomás y Valiente, 7. 28049, Madrid, Spain}
\email{federico.cantero@uam.es}
\author[V. Carmona]{V\'ictor Carmona} 
\address{Víctor Carmona: Max Planck Institute for Mathematics in the Sciences. Inselstrasse 22, 04103, Leipzig, Germany}
\email{vcarmonamath@gmail.com}
\author[J.J. Gutiérrez]{Javier J. Guti\'errez}
\address{Javier J. Guti\'errez: Departament de Matem\`atiques i Inform\`atica, Universitat de Barcelona (UB). Gran Via de les Corts Catalanes 585. 08007, Barcelona, Spain}
\email{javier.gutierrez@ub.edu}
\date{\today}

\begin{abstract}
Action operads and cloning systems are, respectively, the main ingredients in Thumann's and Witzel--Zaremsky's approaches for axiomatically constructing Thompson-like groups. 
In this paper, we prove that action operads are equivalent to cloning systems that admit a certain extra structure, and which we call {\em restricted operadic} cloning systems. In addition, we describe their relation with crossed interval groups and product categories. 
\end{abstract}

\maketitle

\section{Introduction}
The umbrella term {\em Thompson-like groups} makes reference to a vast family of groups that are in some way reminiscent of one of the classical groups $F$, $T$ and $V$ of R. Thompson \cite{CFP}. Apart from these three groups, prominent examples of Thompson-like groups include Stein's groups of PL homeomorphisms \cite{Ste}; Guba--Sapir's  diagram groups \cite{GuSa}; Brin's higher-dimensional Thompson groups \cite{Bri04}; Belk--Forrest's rearrangement groups of fractals \cite{BeFo}; the braided Thompson group of Brin \cite{Bri07} and Dehornoy \cite{Deh}; Wahl's  ribbon Thompson group \cite{Wahl}; the  {\em asymptotic mapping class groups} of surfaces and higher-dimensional manifolds \cite{FK04, FK08,FK09,AF20,ABF+21,GLU20}, etc.

As may be appreciated from the above list of examples, Thompson-like groups arise in a variety of different ways.
With this motivation, the independent results of Witzel--Zaremsky \cite{WZ} and Thumann \cite{Th} offer unified frameworks for constructing Thompson-like groups. Witzel-Zaremsky achieve this in terms of {\em cloning systems}, which provide a recipe for ``twisting'' a direct limit of groups into a Thompson-like group. In turn, Thumann \cite{Th} uses the theory of {\em operads} in order to construct Thompson-like groups, which in this setting arise as the fundamental group of a certain category associated to a braided (resp.\ (non-)symmetric) operad (cf.\ \cite{FioreLeinster}).

The purpose of this paper is to establish a dictionary between cloning systems and certain algebraic objects called {\em action operads} \cite{Zhang,Corner-Gurski,Yoshida,Yau},  which are operads with a compatible, group-theoretical structure. Our first result is the following: 

\begin{taggedtheorem}{\ref{thm:OperadtoCS}}
Every action operad gives rise to a cloning system.
\end{taggedtheorem}

In fact, we precisely determine to which extent a converse to the above theorem holds; more concretely, we will prove that every action operad comes from a cloning system that admits a certain extra structure, and  that we call {\em restricted operadic cloning system}, see Section \ref{section:cloning_systems} and Proposition \ref{prop:CloningtoOper}. We stress that many of the known cloning systems are, in fact, operadic and bilateral; see Section \ref{section:cloning_systems} for examples and non-examples. In this language, our main results may be summarized as follows: 

\begin{taggedtheorem}{\ref{thm:BijectionActOpdsAndOperadicCS}}\label{thm:main2}
There is an explicit bijective correspondence between action operads and restricted operadic cloning systems.
\end{taggedtheorem}

As will become apparent, our methods actually yield the equivalence of the categories of restricted operadic cloning systems
and action operads, respectively.

There is a wider class of cloning systems that yield operads: the \emph{operadic} cloning systems. In fact, operads that arise from these cloning systems comply with all the roles of action operads (i.e., operads that support the equivariance of other operads). These will be defined in Section \ref{section:action_operads} with the name of \emph{general action operads}.

\medskip

\noindent {\bf Further results.} Action operads have been related to crossed simplicial groups in \cite{Zhang} and to crossed interval groups in \cite{Yoshida}. In Section \ref{section:crossed}, we review this relationship and we extend it to cloning systems. Finally, in Section \ref{section:props} we introduce a third construction using PROs ({\em product categories}) which also yields operadic cloning systems. The following diagram summarizes the relation between all of these results: 
$$
\begin{tikzcd}[ampersand replacement=\&]
   \begin{matrix}
       \text{action operads}
\end{matrix}\ar[rr,leftrightarrow,"\text{Thm }\ref{thm:BijectionActOpdsAndOperadicCS}"]\ar[d] \&\& 
   \begin{matrix}
       \text{restricted operadic}\\
       \text{cloning systems}
   \end{matrix}\ar[d] \ar[rr,leftrightarrow, "\text{Thm }\ref{thm:Bijection PROs and CSs}"] \&\& 
    \begin{matrix}
        \text{restricted} \\
       \text{cloning PROs}
   \end{matrix}
   \\
   \begin{matrix}
       \text{general}\\
       \text{action operads}
   \end{matrix}\ar[rr,leftrightarrow,"\text{Thm }\ref{thm:BijectionActOpdsAndOperadicCS}"] \ar[d]
   \&\& 
   \begin{matrix}
       \text{operadic} \\ \text{cloning systems}
   \end{matrix} \ar[d] \&\& \text{cloning PROs} \ar[ll, leftrightarrow, "\text{Thm }\ref{thm:Bijection PROs and CSs}"'] \ar[u, leftarrow] \ar[d]
   \\
     \begin{matrix}
       \text{inert crossed}\\
       \text{demi-interval groups}
   \end{matrix}\ar[rr,leftrightarrow, "\text{Prop }\ref{prop: Bilateral CS to inert crossed demi-interval gp}"]
   \&\&
   \begin{matrix}
    \text{bilateral}\\ \text{cloning systems}
   \end{matrix}\ar[rr]
   \&\& \text{cloning systems}
\end{tikzcd}
$$

\medskip

\noindent{\bf Future work.} This is the first of two papers devoted to the relation between operads and Thompson groups. In a forthcoming paper we will show that, if $A$ is an action operad and $C$ is its associated cloning system, then the fundamental group of a certain $A$\nobreakdash-operad is isomorphic to the Thompson group of the cloning system $C$.

\medskip

\noindent{\bf Plan of the paper.} In Section \ref{section:cloning_systems} we give a brief introduction to cloning systems, and give some examples. Section \ref{section:action_operads} offers an abridged overview of action operads. Section \ref{sec:braids} is devoted to the proof of Theorem \ref{thm:main2} in the special case of braid groups. These ideas are then generalized in Sections \ref{sec:operadtocloning} and \ref{sect:FromCStoActionOperads}. Finally, in Sections \ref{section:crossed} and \ref{section:props}, we will give further interpretations of our results in terms of crossed interval groups and PROs, respectively.

\medskip

\noindent{\bf Acknowledgements.} This project started with some informal conversations at the {\em IX Encuentro de J\'ovenes Top\'ologos}, held in Seville in 2021. We are grateful to the organization for their hospitality and support. The second author thanks An\'ibal Medina for a very enlightening conversation about Joyal duality. Finally, it is our pleasure to thank the referees for a great number of insightful comments and
suggestions that greatly helped improving this paper.

\section{Cloning systems}\label{section:cloning_systems}

In this section we offer a brief introduction to Witzel--Zaremsky's  cloning systems \cite{WZ}, and define their {\em bilateral} counterparts; we refer the interested reader to \cite{WZ,Zaremsky} for a detailed account on cloning systems. 

We start with a specific example, which appears as Example 2.9 in \cite{WZ}, and that will serve to establish some notation for the sequel. In what follows, $\Sigma_n$ stands for the symmetric group on $n$ elements. 

\begin{examp}[Cloning system for symmetric groups] Let $\Sigma_{\bullet}=\{\Sigma_n\}_{n\ge 1}$ be the family of symmetric groups. For every $n\ge 1$, let $\lambda_{n}:\Sigma_n \to \Sigma_{n+1}$ be the injective homomorphism obtained by fixing the last element, that is,
$$
\lambda_n(\sigma)(i)=\sigma(i) \mbox{ for $1\le i\le n$ and } \lambda_n(\sigma)(n+1)=n+1,
$$
for every $\sigma\in\Sigma_n$. For every $n\ge 1$ and $1\le j\le n$, let $c^n_j: \Sigma_n\to \Sigma_{n+1}$ be the injective map given by, thinking about permutations pictorially as strand diagrams, ``repeating'' the $j$-th strand; that is, 
$$
c_j^n(\sigma)(i)=
\left\{
\begin{array}{ll}
\sigma(i) & \mbox{ if $i\le j$ and $\sigma(i)\le\sigma(j)$,} \\
\sigma(i)+1 & \mbox{if $i<j$ and $\sigma(i)>\sigma(j)$,}\\
\sigma(i-1) & \mbox{if $i>j+1$ and $\sigma(i-1)<\sigma(j)$,}\\
\sigma(i-1)+1 & \mbox{if $i\ge j+1$ and $\sigma(i-1)\ge \sigma(j)$,}
\end{array}
\right.
$$
for every $\sigma\in\Sigma_n$. Notice that $c^n_j$ is not a group homomorphism. As shown in \cite[Example 2.9]{WZ} the families of morphisms $\lambda$ and $c$ interact with each other, and satisfy certain obvious compatibility properties, detailed in \cite[Proposition 2.6]{WZ}. The \emph{cloning system} for the family of symmetric groups is the triple $(\Sigma_\bullet, \lambda, c)$, subject to these compatibility conditions. 
\end{examp}

The maps $c^n_j$ above are called {\em cloning maps}, for obvious reasons. The notion of a cloning system is a generalization of the above example to arbitrary families of groups.

\begin{defn}\label{defn:CloningSystem}A \emph{cloning system} is a quadruple $(\G_{\bullet},\iota,\kappa,\pi)$, where
\begin{itemize}
    \item $\G_{\bullet}=\{\G_n\}_{n\geq 1}$ is a family of groups,
    \item $\iota=\{\iota_n\colon\G_n\to\G_{n+1}\}_{n\ge 1}$ is a family of  homomorphisms,
    \item $\kappa=\{\kappa^n_{j}\colon\G_{n}\to\G_{n+1}\}_{n\ge 1,\,1\leq j\leq n}$ is a family of maps, called \emph{cloning maps}, and
    \item $\pi= \{\pi_n \colon \G_n\to \Sigma_n\}_{n\ge 1}$ is a  family of homomorphism,
\end{itemize}
 subject to the following compatibility conditions:
\begin{itemize}
    \item[(i)]\label{it:i} $\pi_{n+1} \circ \iota_n=\lambda_n \circ \pi_n$, for all $n\ge 1$;  
    \item[(ii)]\label{it:ii} $(\pi_{n+1}(\kappa^n_j(g)))(i) = (c^n_j(\pi_n(g)))(i)$, for all $n\ge 1$ and all $1\le j\le n$, all $i\neq j,j+1$, all $g\in G_n$;
    \item[(iii)]\label{it:iii} $\iota_{n+1}\circ\kappa^n_j= \kappa_j^{n+1}\circ \iota_n$, for all $n\ge 1$ and all $1\le j\le n$;
    \item[(iv)]\label{it:iv} $\kappa^{n+1}_{j+1} \circ \kappa_l^n = \kappa_l^{n+1} \circ \kappa_{j}^{n}$, for all $n$ and all $l< j\le n$;
    \item[(v)]\label{it:v} $\kappa^n_j(g\cdot h)=\kappa^n_{\pi_n(h)(j)}(g)\cdot\kappa^n_j(h)$, for all $n$, all $g,h \in \G_n$ and all $j\le n$.
\end{itemize}
\end{defn}
\begin{rem} It is important to note some differences between Definition \ref{defn:CloningSystem} and the definition of cloning system of~\cite{WZ,Zaremsky}. First, we use a functional convention for composition of maps, that is, the composition of two functions $f\colon X\to Y$ and $g\colon Y\to Z$ is denoted by $g\circ f$, defined as $(g\circ f)(x)=g(f(x))$. Moreover, all our figures represent compositions from top to bottom. Second, 
the definition of cloning system in \cite{Zaremsky} requires \emph{injective} homomorphisms $\iota_{n,m}\colon \G_{n}\to \G_{m}$ for every $m> n\geq 1$. First, the equivalence between Definition \ref{defn:CloningSystem} and this one implies that it suffices to consider injective maps $\iota_{n,n+1}=\iota_n$ for all $n\geq 1$. And second, the injectivity condition is not essential, and hence we may remove this assumption altogether (cf.\ Remark \ref{rem:crossed_injectivity}).  
\end{rem}

Witzel and Zaremsky observe that cloning systems often satisfy the following strengthed version of these axioms. 
\begin{itemize}
   	\item[(iv+)] $\kappa^{n+1}_{j+1}\circ \kappa^n_{j} = \kappa_{j}^{n+1}\circ \kappa_j^n$, for all $1\le j\le n$;
    \item[(vii)]\label{it:iii'} $\iota_{n+1}\circ \iota_n = \kappa_{n+1}^{n+1}\circ \iota_n$, for all $n\ge 1$;    
\end{itemize}
\begin{rem}
    Condition (vii) was part of the original definition of cloning system: In~\cite[Definition 2.18]{WZ} the cloning maps $\kappa$ are required to be a \emph{family of cloning maps}, which must satisfy two conditions presented at the beginning of page 309 in that paper. These two conditions correspond to Conditions (iii) and (vii) in this article.

In \cite[page 308]{WZ} (see also \cite{Bri07}) the hedge monoid ${\mathcal H}$ is introduced, together with a surjective map ${\mathcal F}\to {\mathcal H}$ from the monoid of forests to the monoid of hedges. The colimit of the groups in a cloning system comes with an action of ${\mathcal F}$. Condition~(iv+) in this article is equivalent to requiring that that action factors through the hedge monoid (which holds for most examples; see Observation 2.11 and paragraph before Observation 2.19 in \cite{WZ}).

\end{rem}

We now introduce the notion of a \emph{bilateral} cloning system. In a nutshell, in the same way that the maps $\iota$ of Definition \ref{defn:CloningSystem} informally correspond to ``adding elements on the right'', a bilateral cloning system comes equipped with  a family of ``dual'' maps $\zeta$ that correspond to ``adding elements on the left''. 

We now proceed to formalize this idea. For the family of symmetric groups, we denote by $\rho_n: \Sigma_n \to \Sigma_{n+1}$ the injective homomorphism that fixes the first element, that is, 
$$
\rho_n(\sigma)(i)=\sigma(i-1)+1 \mbox{ for $2\le i\le n+1$ and } \rho_n(\sigma)(1)=1,
$$
for every $\sigma\in\Sigma_n$.

\begin{defn}\label{defn:NCCloningSystem} A \emph{bilateral cloning system} is a quintuple $(\G_{\bullet},\iota, \zeta, \kappa,\pi)$, where $(\G_{\bullet},\iota,\kappa, \pi)$ is a cloning system satisfying conditions (iv+) and (vii), and $\zeta=\{\zeta_n\colon \G_n\to\G_{1+n}\}_{n\ge 1}$ is an additional family of homomomorphisms satisfying the following conditions (see Figures \ref{fig:AxiomVI} and \ref{fig:AxiomVIII}):
\begin{itemize}
    \item[(i')] $\pi_{n+1} \circ \zeta_n=\rho_{n}\circ \pi_n $, for all $n\ge 1$;  
    \item[(iii')] \label{it:iiib} $\zeta_n\circ \kappa_j^n = \kappa_{j+1}^{n+1}\circ\zeta_{n}$, for all $n\ge 1$ and all $1\le j\le n$;
    \item[(vi)] $\zeta_{n+1}\circ \iota_n = \iota_{n+1}\circ\zeta_{n}$, for all $n\ge 1$;
    \item[(vii')]\label{it:iiib'} $\zeta_{n+1}\circ \zeta_n = \kappa_{1}^{n+1}\circ \zeta_n$, for all $n\ge 1$;
 \end{itemize}
 A bilateral cloning system is called \emph{restricted} if it additionally satisfies the following condition:
 \begin{itemize}
        \item[(ii+)] $\pi_{n+1} \circ \kappa^n_j = c^n_j \circ \pi_n$, for all $n\ge 1$ and all $1\le j\le n$;
 \end{itemize}
\end{defn}

For our last notion in this section, we introduce the following maps, whose motivation will become clear in Remark \ref{rem:properties}.
\begin{notat}\label{notat: generalized structure maps of a bilateral cloning system}
 Let $(\G_{\bullet},\iota, \zeta, \kappa,\pi)$ be a bilateral cloning system. We will define some maps and exemplify them in the case of the cloning system of symmetric groups. For that, recall that each partition of $\{1,\ldots,n+r\}$ into $n$ blocks yields a homomorphism $\Sigma_n\to \Sigma_{n+r}$ that sends a permutation of $n$ elements to the permutation of the $n$ blocks. For example, the partition $\fbox{1}\fbox{23}\fbox{45}$ of $\{1,2,3,4,5\}$ yields a map $\Sigma_{3}\to \Sigma_5$, and the image of the permutation $(2,1,3)$ is the permutation $(2,3,1,4,5)$.

\begin{itemize}
    \item $\iota_n(r)\colon \G_n\to \G_{n+r}$  is defined as the  composition $\iota_n(r)=\iota_{n+r-1}\circ\cdots\circ\iota_{n}$, for all $n,r\ge 1$. If $\G=\Sigma$, then $\iota_n(r)(g)$ is obtained from $\iota_n(g)$ and the block decomposition 
    \[
    \fbox{$\!\!\phantom{(}1\!\!\phantom{)}$}\ldots\fbox{$\!\!\phantom{(}n\!\!\phantom{)}$}\fbox{$(n+1)\ldots (n+r)$}.
    \]
     \item $\zeta_n(l)\colon\G_{n}\longrightarrow\G_{l+n}$ is defined as the composition $
\zeta_n(l)=\zeta_{n+l-1}\circ\cdots\circ\zeta_{n}$, for all $n,l\ge 1$. If $\G=\Sigma$, then $\zeta_n(l)(g)$ is obtained from $\zeta_n(g)$ and the block decomposition
    \[
        \fbox{$\!\!\phantom{(}1\ldots  l\!\!\phantom)$}\fbox{$(l+1)$}\ldots\fbox{$(n+l)$}.
    \] 
    \item $\kappa_j^n(m)\colon \G_n\to \G_{n+m-1}$ is defined as  $\kappa_j^n(m)=\kappa_j^{n+m-2}\circ\cdots\circ \kappa_j^{n}$, for all $n,m\ge 1$ and all $1\le j\le n$. If $\G=\Sigma$, then $\kappa_j^n(m)(g)$ is obtained from $g$ and the block decomposition 
    \[
        \fbox{\!\!\phantom{(}1\!\!\phantom{)}}\ldots \fbox{$(j-1)$}\fbox{$j\ldots (j+m-1)$}\fbox{$(j+m)$}\ldots\fbox{$(n+m-1)$}.
    \]
    \item $\nu^n_j(m)\colon\G_{n}\to\G_{n+m-1}
$ is defined as $\nu_j^n(m)=\zeta_{m+n-j}(j-1)\circ\iota_n(m-j)=\iota_{n+j-1}(m-j)\circ\zeta_n(j-1)$,  for all $m,n\ge 1$ and $1\leq j\leq m$. If $\G=\Sigma$, then $\nu^n_j(m)$ is obtained from $\zeta_{n+1}\circ\iota_n(g)$ and the block decomposition
\[
\fbox{$1\ldots (j-1)$}\fbox{$\!\!\phantom{(}j\!\!\phantom{)}$}\ldots\fbox{$(j+n-1)$}\fbox{$(j+n)\ldots (n+m-1)$}.
\]
\end{itemize}
    
Note that the morphisms $\zeta_n(l)$ and $\nu_j^n(m)$ only make sense for bilateral cloning systems and that, by convention, we consider $\iota_n(0)=\zeta_n(0)=\kappa_j^n(1)=\id_{\G_n}$.
\end{notat}

\begin{defn}\label{defn:OperadicCloningSystem} An 
\emph{operadic cloning system} is a bilateral cloning system for which the following conditions are satisfied (see Figures \ref{fig:AxiomX} and \ref{fig:AxiomXI}):
\begin{itemize}
	\item[(viii)]$\kappa_i^n(m)(g)\cdot \nu_i^m(n)(h) = \nu^m_{\pi(g)(i)}(n)(h)\cdot \kappa^n_i(m)(g)$, for all $m,n\geq 1$ and all $g\in \G_n$ and $h\in\G_m$;
	\item[(ix)] $\iota_n(m)(g)\cdot \zeta_m(n)(h)=\zeta_m(n)(h)\cdot \iota_n(m)(g)$, for all $m,n\geq 1$ and all $g\in\G_n$ and $h\in\G_m$.
\end{itemize}
\end{defn}


In order to get a grip on the intuition behind the definitions above, we next describe perhaps the primordial example of a (bilateral) cloning system, namely that of braid groups; we refer the reader to \cite{WZ} for details.   

\begin{examp}[Braid groups]\label{examp:cloning}
    
 Let $\Br_n$ denote the braid group on $n$ strands. For every $n\ge 1$ there is a canonical surjective group homomorphism $\pi_n\colon\Br_n\to\Sigma_n$ by sending each braid to its underlying permutation. We also have inclusion maps $\iota_n\colon \Br_n\to\Br_{n+1}$ corresponding to ``adding one strand on the right'', and cloning maps
$$
\kappa_{j}^n\colon \Br_n\longrightarrow \Br_{n+1}
$$
given by duplicating the $j$-th strand into two parallel strands (commonly known as \emph{cabling} in the literature); see Figure \ref{fig:Kappa Braid} for an example, and \cite[Example~3.3]{Zaremsky} for details. Recall that we always draw braids and compositions from top to bottom.
\begin{figure}[htp]
    \centering
    \includegraphics[width=6cm]{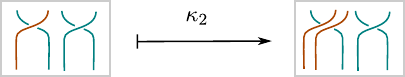}
    \caption{Cloning map $\kappa_2^4$ on $\Br_4$.}
    \label{fig:Kappa Braid}
    \end{figure}

Together, the three families of maps defined above endow the collection of all braid groups $\Br_{\bullet}=\{\Br_n\}_{n\ge 1}$ with the cloning system structure $\Br=(\Br_{\bullet},\iota,\kappa,\pi)$, see \cite{WZ} for a proof. 

Moreover, in analogy with the maps $\iota_n$, we can also define inclusion maps $\zeta_n\colon \Br_n\to\Br_{n+1}$ that informally correspond to ``adding one strand on the left''. Equipped with these maps, one readily checks that $\Br=(\Br_{\bullet},\iota,\zeta,\kappa,\pi)$ is a restricted bilateral cloning system. Moreover, it is an operadic cloning system. For illustrative purposes, Figures \ref{fig:AxiomVI} to \ref{fig:AxiomXI} depict particular instances of some of the conditions of the bilateral cloning system structure of $\Br_{\bullet}$.
\end{examp}  
 \begin{figure}
 \centering
 \includegraphics[width=7cm]{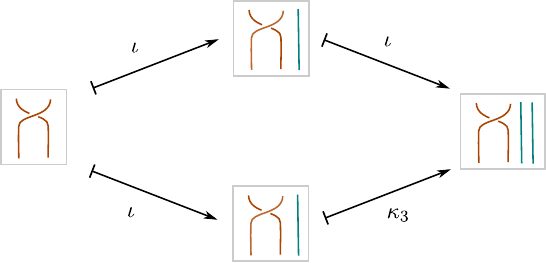}
    \caption{Condition (vii), $\kappa_3^3\circ \iota=\iota\circ\iota$.}
    \label{fig:AxiomVI}
    \end{figure}
 \begin{figure}
    \centering
    \includegraphics[width=7cm]{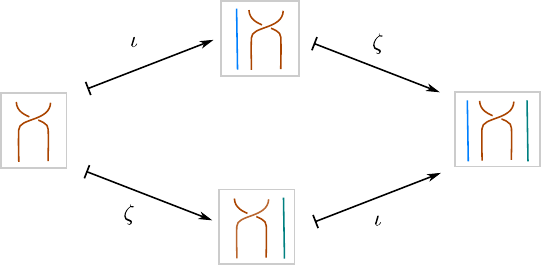}
    \caption{Condition (vi), $\iota\circ \zeta=\zeta\circ \iota$.}
    \label{fig:AxiomVIII}
    \end{figure}
\begin{figure}
    \centering
    \includegraphics[width=10cm]{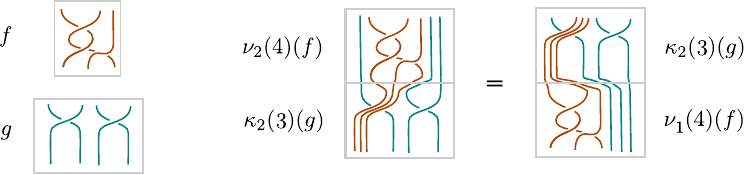}
    \caption{Condition (viii).}
    \label{fig:AxiomX}
    \end{figure}
 \begin{figure}
    \centering
    \includegraphics[width=10cm]{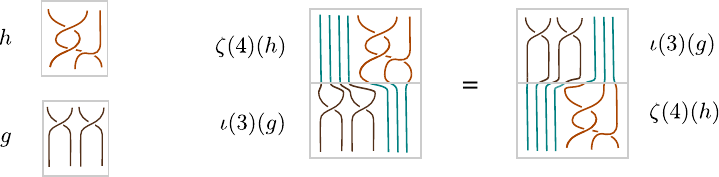}
    \caption{Condition (ix).}
    \label{fig:AxiomXI}
    \end{figure}

\begin{examp}\label{examp:CloningSystem of Signed Permutations}(Signed symmetric groups)
     Let $\Sigma^{\pm}_n$ denote the signed symmetric group, that is, the set of permutations $g$ of the set $\{1,-1,2,-2,\ldots,n,-n\}$ such that $g(i) = - g(-i)$; see \cite{Zaremsky}. First of all, the morphism $\pi_n\colon \Sigma^{\pm}_n\to \Sigma_n$ is determined by $\pi_n(g)(i)=\vert g(i)\vert$. The structure map $\zeta_n$ is determined by $\zeta_n(g)(n+1)=n+1$, and $\zeta_n(g)(i)=g(i)$ when $1\leq i\leq n$. The maps $\iota_n$ are defined analogously, by inserting a fixed first element instead of a last one. Finally, the maps $\kappa_j^n$ are defined as follows:
     
     \[
        \kappa_j^n(g)(i) = \begin{cases}
            g(i) & \text{if $1\leq i<j$}, \\
            g(i) & \text{if $i=j$ and $g(i)>0$}, \\
            g(i)+1 & \text{if $i=j+1$ and $g(i)>0$}, \\
            g(i)-1 & \text{if $i=j$ and $g(i)<0$}, \\
            g(i) & \text{if $i=j+1$ and $g(i)<0$}, \\
            g(i-1) & \text{if $j+1<i\leq n+1$}.
        \end{cases}
     \]
 Equipped with these maps, $\Sigma^{\pm}=(\Sigma^{\pm}_{\bullet},\iota,\zeta,\kappa,\pi)$ is a bilateral cloning system, which is \emph{not} restricted. Moreover, it is an operadic cloning system.
\end{examp}

Other examples of families of groups admitting a bilateral cloning system structure are the {\em mock symmetric groups} and the {\em loop braid groups} (also known as {\em symmetric automorphisms of free groups}); see \cite{WZ}; as well as the {\em twisted braid groups} \cite{Zaremsky}. 
The latter two bilateral cloning systems are not restricted.
\medskip

Next, we discuss two examples of (bilateral) cloning systems from \cite{WZ} and \cite{Zaremsky} which, as we will see, are not operadic.

\begin{examp}[Direct powers]\label{examp:DirectPowers}
Let $G$ be a group and denote by $G^n$ the $n$-fold direct product of $G$ with itself. Write  $\iota_n: G^n \to G^{n+1}$ for the map that adds the identity (in $G$) as last entry, let $\pi_n: G^n \to \Sigma_n$ the trivial homomorphism, and consider the map $\kappa^n_j:G^n \to G^{n+1}$ that duplicates the $j$-th entry. Then, the quadruple $(\{G^n\}_{n\ge 1}, \iota, \pi, \kappa)$ is  a cloning system on the set of direct powers of $G$. 

Despite the fact that there is an obvious map $\zeta_n: G^n \to G^{n+1}$ that adds the identity (in $G$) as first entry, one may check that the maps $\kappa$ and $\nu$ satisfy conditions (viii) and (ix) of the definition of an operadic cloning system iff $G$ is abelian; e.g.\ take $n=m=1$ in those axioms to obtain the direct implication.
\end{examp}

\begin{examp}[Upper triangular matrices]\label{examp:UpperTriangularMatrices}
Let $U_n$ denote the group of invertible $n\times n$ upper triangular matrices with real coefficients. Consider the obvious inclusion map $\iota_n: U_n \to U_{n+1}$ given by adding a 1 as the lowermost element on the diagonal. 

There are cloning maps $\kappa_j^n: U_n \to U_{n+1}$ that informally correspond to a certain duplication of the $j$-th column that preserves the upper triangular structure of the matrix, and which becomes apparent just by giving the following particular example; see \cite{Zaremsky} for details: 

\[
\kappa^3_2\begin{pmatrix}
1 & 2 & 3 \\ 0 & 4 & 5 \\ 0 & 0 & 6
\end{pmatrix}  = 
\begin{pmatrix}
1 & 2 & 2 & 3 \\ 0 & 4 & 0 & 0 \\ 0 & 0 & 4 & 5 \\ 0 & 0 & 0 & 6
\end{pmatrix}
\]
Setting $\pi_n: U_n \to \Sigma_n$ to be the trivial homomorphism, the set $U_\bullet = \{U_n\}_{n \ge 1}$ acquires the cloning system structure $U = (U_\bullet, \iota, \kappa, \pi)$. 

Observe that one could define another inclusion map $\zeta_n: U_n \to U_{n+1}$ by adding a 1 as the uppermost element of the diagonal; however, as in the previous example, the interaction of the maps $\zeta$ and $\kappa$ does not satisfy condition (viii) of the definition of an operadic cloning system.
\end{examp}

\section{Action operads}\label{section:action_operads}
We start this section recalling the classical notion of an {\em operad}:

\begin{defn}\label{defn:Operad} 
A \emph{symmetric operad} $\EE$ on sets is a triple $(\EE, \{\circ_i\}_i, \id)$, where
\begin{itemize}
\item $\EE=\{\EE(n)\}_{n\ge 1}$ is a family of sets, and each $\EE(n)$ is equipped with a right $\Sigma_n$-action, for every $n\ge 1$,
\item ${\id}\in \EE(1)$ is called the \emph{unit} of the operad, and \item $\{\circ_i\}_i$ is a family of maps
$$
\circ_i\colon \EE(n)\times \EE(m)\to \EE(n+m-1) \quad\mbox{for all $n,m\ge 1$ and $1\le i\le n$},
$$ called \emph{$\circ_i$-operations} or \emph{partial composition products},
such that ${\id} \circ_i y=y$ and $x\circ_i {\id}=x$ for every $x,y\in \EE(n)$. Moreover, these $\circ_i$\nobreakdash-operations satisfy certain associativity and equivariance axioms, which are spelled out in, for example, \cite[Definition 11]{Markl}.
\end{itemize}
If we forget about all the symmetric group actions on the sets $\EE(n)$, we have the notion of a \emph{non-symmetric operad}.

\medskip

A \emph{morphism of operads} $f\colon\EE\to \mathsf{P}$ consists of maps $f_n\colon \EE(n)\to \mathsf{P}(n)$ for $n\ge 1$, that are compatible with the unit and $\circ_i$-operations of $\EE$ and $\mathsf{P}$, plus the $\Sigma_n$-actions in the case of symmetric operads. 
\end{defn}

\begin{rem} There is an alternative definition of (non-)symmetric operad, see \cite[Definition 1]{Markl}, that replaces the maps $\circ_i\colon \EE(n)\times \EE(m)\to \EE(n+m-1)$ by full-composition products
$$
\EE(n)\times\big(\EE(m_1)\times \dots\times \EE(m_n)\big)\longrightarrow \EE(m_1+\dots +m_n).
$$
As shown in \cite[Proposition 13]{Markl}, both definitions are interchangeable if one considers units, as we do. We prefer to use the version with partial composition products because it is better suited for the results of this paper.
\end{rem}

\begin{rem} Note that Definition \ref{defn:Operad} avoids nullary operations in an operad, i.e.\ there is no $\EE(0)$ in $\EE$. In this work, we will only consider operads without nullary operations, or in other words, without constants. This choice is not essential, but it is made to have a clearer connection between action operads and cloning systems.
\end{rem}

\begin{examp}[Symmetric groups]
The family of symmetric groups $\Sigma_{\bullet}=\{\Sigma_n\}_{n\ge 1}$ has the structure of a non-symmetric operad on sets, with $\id\in\Sigma_1$ the trivial permutation of $\Sigma_1$. The partial composition products $\circ_i\colon\Sigma_n\times \Sigma_m\to \Sigma_{m+n-1}$ are defined as follows: if $\sigma\in \Sigma_n$ and $\tau\in\Sigma_m$, then $\sigma\circ_i\tau$ is the permutation of $\Sigma_{m+n-1}$ obtained by ``inserting'' $\tau$ in $\sigma$ at the $i$-th place as a block and rearranging the indices accordingly. Figure~\ref{fig:CompositionProduct Sigma} shows an example of the composition product $\circ_2\colon\Sigma_4\times \Sigma_3\to\Sigma_6$.
\begin{figure}[h]
    \centering
    \includegraphics[width=5cm]{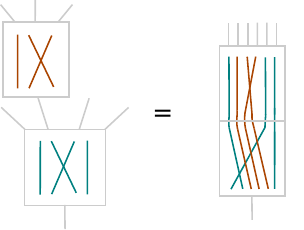}
    \caption{\scriptsize
    $(1,3,2,4)\circ_2(1,3,2)=(2,3,5,4,1,6)$.
    }
    \label{fig:CompositionProduct Sigma}
    \end{figure}
\end{examp}

\subsection{Action operads}
We now introduce the concept of an {\em action operad}, which essentially is a family of groups satisfying certain properties that allow one to define operads with equivariance relative to this family; the reader is referred to \cite{Corner-Gurski} for a detailed treatment of action operads and their properties. They have been also studied under the name \emph{group operads} \cite{Zhang, Yoshida, Yau}.

\begin{defn}\label{defn:ActionOperad} An \emph{action operad without constants} $\G$, or simply an \emph{action operad}, is a quadruple $(\G_{\bullet},\pi,\{\circ_i\}_i,\id)$, where
\begin{enumerate}
    \item\label{cond:ac:1} $\G_\bullet=\{\G_n\}_{n\ge 1}$ is a family of groups, and $(\G_{\bullet},\{\circ_i\}_i,\id)$ is a non-symmetric operad on sets, where $\{\circ_i\}_i$ are the partial composition products of the operad and $\id\in \G_1$ is the unit. The associativity and unitality of the partial composition products assert that if $f\in \G_n$, $g\in \G_m$ and $h\in \G_l$, then 
\begin{align*}
(f\circ_i g)\circ_j h &= \begin{cases}
(f\circ_j h) \circ_{i+l-1}g & \text{if $j<i$},\\
 (f \circ_{j-m+1} h)\circ_i g& \text{if $j\geq i+m$},\\
f\circ_i (g\circ_{j-i+1}h) & \text{if $j=i,\ldots, i+m-1$},
\end{cases} \\
f\circ_i \id &= f, \\
\id\circ_1 g &= g;
\end{align*}
    \item\label{cond:ac:2} $\pi: \G_\bullet \to \Sigma_{\bullet}$ is a map of operads which is also a levelwise group homomorphism, that is, $\pi_n\colon\G_n \to \Sigma_n$ is a homomorphism for all $n\ge 1$;
\item\label{cond:ac:3} For every $f, f' \in \G_n$, $g,g'\in \G_{m}$, we have that
\begin{align*}
(f\cdot f')\circ_i(g\cdot g') &= (f\circ_{\pi_n(f')(i)}g)\cdot (f'\circ_i g'),
\end{align*}
with the multiplication taking place in the group $\G_{n+m-1}$.
\end{enumerate}
\label{def:actionoperad}
\end{defn}

 Note that the partial composition products $\circ_i$ are not group homomorphisms in general, that action operads are not assumed to be symmetric operads, and that they have no nullary operations. It follows from the axioms that the unit element $\id\in \G_1$ of the operad $\G_{\bullet}$ is precisely the unit $e_1$ of the group $\G_1$. Analogously, we will denote by $e_n$ the unit of the group $\G_n$.

\begin{notat}\label{notat: Dropping pi when understood} Observe that, both in a cloning system and in an action operad, the group $\G_n$ acts on the set $\{1,\ldots,n\}$ via the homomorphism $\pi_n$ for every $n$. If these maps $\pi_n$ are understood from the context, for $g\in \G_n$ we will write $g(i)$ instead of $\pi_n(g)(i)$. 
\end{notat}

\begin{rem}\label{rem: pi trivial for action operad}
    Finally, note that an action operad with trivial $\pi$ is the same thing as a non-symmetric operad on  groups.
\end{rem}

\subsection{General action operads} Inspired by the definition of cloning system, we relax as follows the definition of action operad. Note that this relaxation does not diminish the ability of such an operad to act on other operads (see the beginning of Section \ref{subsect:Fundamental gps of G-operads}).  
In fact, we believe that this should be the adequate definition of action operad. 
\begin{defn}\label{defn:general_action_operad}
    A \emph{general action operad} $\G$ is a tuple $(\G_\bullet,\pi,\{\circ_i\}_i,\id)$ satisfying Conditions \eqref{cond:ac:1} and \eqref{cond:ac:3} together with the following:
    \begin{enumerate}
 \item[(2')]
 $\pi_n\colon \G_n\to \Sigma_n$ is a group homomorphism for each $n\geq 1$ such that if $g\in \G_n$ and $h\in \G_m$, then the action of the symmetric group elements $\pi_n(g)\circ_i \pi_m(h)$ and $\pi_{n+m-1}(g\circ_i h)$ coincide on $$
        \{1,2,\ldots,n+m-1\}\backslash \{i,\ldots, i+m-1\}.
        $$
    \end{enumerate}
    A \emph{morphism of (general) action operads} is a map of operads $\phi\colon \G\to \widehat{\G}$ satisfying:
\begin{itemize}
    \item[{\rm(i)}] $\phi_n\colon \G_n\to \widehat{\G}_n$ is a group homomorphism for any $n\geq 1$, and 
    \item[{\rm(ii)}] $\widehat{\pi}_n \phi_n=\pi_n$ for any $n\geq 1$. 
    \end{itemize}
\end{defn}
Here is an example of a general action operad that is \emph{not} an action operad. It corresponds to the cloning system of signed symmetric groups or to the hyperoctahedral inert demi-interval group (see Section \ref{section:crossed}).

\begin{examp}\label{examp:Signed Permutations}
    The signed symmetric groups $\Sigma^{\pm}_n$ from Example \ref{examp:CloningSystem of Signed Permutations} can be equipped with a general action operad structure. The composition $g\circ_i h$ is defined as follows: if $g(i)$ is positive for $i\geq 1$, then $g\circ_i h$ is obtained by cabling the signed symmetry $h$ in the $i$-th strand of the permutation $g$. If $g(i)$ is negative for $i\geq 1$, then $g\circ_i h$ is obtained by cabling the signed symmetry $h'$ in the $i$-th strand of $g$, where $h'(j) = -h(m-j+1)$ for a positive $j$. A signed permutation $g$ is determined by the pair $(\sigma,A)$, where $\sigma$ is the underlying permutation and $A$ is the set of positive $i$'s such that $g(i)$ is negative. With this notation, the composition of $(g,A)$ and $(g',A')$ is $(g\circ g',g'(A)\triangle A')$, where the symbol $\triangle$ denotes symmetric difference.
\end{examp}

From the classification of inert crossed interval groups and the relation between action operads and crossed interval groups, it is reasonable to think that the signed symmetric general action operad is final in the category of general action operads. Motivated by this observation, we provide a simplification of the definition of general action operad which, as a byproduct, shows that this is indeed the case.

Let us note that given a general action operad $\G=(\G_\bullet,\pi,\{\circ_i\}_i,\id)$, we can define a family of maps $\pi^{\pm}\colon \G_{\bullet}\to \Sigma^{\pm}_{\bullet}$ as follows: For an element $g\in \G_n$, define $\pi^{\pm}_n(g) = (\pi_n(g),A)$ with $A = \{j\mid \pi_{n+1}(g\circ_j e_2)\neq \pi_n(g)\circ_j e_2\}$.

\begin{lem}\label{lem: Simplification of general action operad axioms} Condition {\rm (2')} in the definition of general action operad $\G$ can be replaced by the following: 
    \begin{itemize}
        \item[($2^{\pm}$)] $\pi^{\pm}: \G_\bullet \to \Sigma^{\pm}_{\bullet}$ is a map of operads and a levelwise group homomorphism. 
    \end{itemize}  
    Moreover, Condition {\rm (ii)} in the definition of a morphism of general action operads $\phi\colon \G\to \widehat{\G}$ is equivalent to requiring:
     \begin{itemize}
        \item[$(\mathrm{ii}^{\pm})$] $\widehat{\pi}^{\pm}_n\phi_n=\pi_n^{\pm}$ for any $n\geq 1 $. 
    \end{itemize}
\end{lem}
\begin{proof} Suppose that $\pi\colon \G_{\bullet}\to \Sigma_{\bullet}$ is a family of maps satisfying Condition (2'). We check that the associated family of maps $\pi ^{\pm}\colon \G_{\bullet}\to \Sigma^{\pm}_{\bullet}$, defined right before Lemma \ref{lem: Simplification of general action operad axioms}, satisfies Condition ($2^{\pm}$).

 To check that $\pi^{\pm}_n$ is a group homomorphism, we have that, if $g,h\in \G_n$ have images $\pi^{\pm}_n(g) = (\sigma,A)$, $\pi^{\pm}_n(h) = (\sigma',A')$ and $\pi^{\pm}_n(g\cdot h) = (\sigma'',A'')$, then $\sigma'' = \sigma\cdot \sigma'$ and
    \[
        (g\cdot h)\circ_j e_2 = (g\cdot h)\circ_j (e_2\cdot e_2) = (g\circ_{\sigma'(j)} e_2)\cdot (h\circ_j e_2)
    \]
    The product $\pi_{n+1}((g\cdot h)\circ_j e_2)$ is either equal to $\pi_n(g\cdot h)\circ_j e_2$ or differs from it by a twist of the entries $\{j,j+1\}$. Inspection shows that $A'' = g'(A)\triangle A'$.

    One still has to show that $\pi^{\pm}$ is an operad map. By the multiplication rule it is enough to check it for the compositions $g\circ_j e_m$. Since the latter are iterated compositions of the form $g\circ_j e_2$, it is enough to check it in this last case, in which is true by definition.

    Finally, if $\pi^{\pm}\colon \G_{\bullet}\to \Sigma_{\bullet}^\pm$ is a family of maps satisfying Condition ($2^{\pm}$), then composing them with the homomorphisms $\Sigma_{n}^{\pm}\to \Sigma_n$ that forget the signs yields a family of maps satisfying $(2')$.

    In order to prove $(\mathrm{ii}^{\pm})$, let $g\in \G_n$ and let $j\in \{1,\ldots n\}$. We have to prove that
    \[
        \widehat{\pi}^{\pm}_n(\phi_n(g))(j) = \pi^{\pm}(g)(j)
    \]
    Since $\phi$ is a map of general action operads, we know that both sides are equal except possibly for a sign. The sign of the right hand-side is positive if $$\pi_n(g\circ_j e_2) = \pi_n(g)\circ_j e_2$$ and negative otherwise. The sign of the left hand-side is positive if $$\widehat{\pi}_n(\phi_n(g)\circ_j e_2) = \widehat{\pi}_n(\phi_n(g))\circ_j e_2.$$ and negative otherwise. Since $\phi_n$ is a morphism of general action operads, both left hand-sides agree and both right hand-sides agree, thus $(\mathrm{ii}^{\pm})$ holds.
\end{proof}

\begin{cor} The final object in the category of general action operads is $\Sigma^{\pm}_{\bullet}$. 
\end{cor}

\begin{rem}\label{rem: Condition ii' for cloning systems}
    This lemma has its counterpart in the world of cloning systems: Condition (ii) in the definition of cloning system can be replaced by the following:
    \begin{itemize}
        \item[(ii')] $\pi_n\colon \G_n\to \Sigma_{n}^\pm$ is a levelwise group homomorphism such that the identity $\pi_{n+1}\circ \kappa_j^n = c_j^n\circ \pi_n$ holds for all $n\geq 1$ and all $1\leq j\leq n$,
    \end{itemize}
where $c_j^n$ are the cloning maps of the signed symmetric cloning system. In order to make sense of axioms (v) and (viii), we are implicitly using the fact that $\Sigma^\pm_n$ acts on $\{1,2,\ldots,n\}$ through the homomorphism $\Sigma_n^{\pm}\to \Sigma_n$ that forgets the signs.
\end{rem}

\subsection{Fundamental groups of $\G$-operads and  unoriented ribbon braids}\label{subsect:Fundamental gps of G-operads}

Fix a (general) action operad $\G$. Then, one may define a $\G$-operad as in \cite[Def.~1.14]{Corner-Gurski}, \cite[Def.~4.2.6, Prop. 4.3.1]{Yau} or \cite[Def.~2.30]{Zhang}, i.e., as a non-symmetric operad $\EE$ with an action $\triangleright\colon\EE(n)\times \G_n\to \EE(n)$ such that the composition
\[
    \EE(n)\times \big(\EE(m_1)\times \ldots\times \EE(m_n)\big)\lra \EE(m_1+\ldots+m_n)
\]
is equivariant with respect to the map
\[
    \G_n\times (\G_{m_1}\times \ldots\times \G_{m_n})\lra \G_{m_1+\ldots+m_n}.
\]
Alternatively, if one considers operads with identities as we do, one requires the composition $\circ_i\colon\EE(n)\times \EE(m)\to \EE(n+m-1)$ to be equivariant with respect to the map $\circ_i\colon\G_n\times \G_m\to \G_{n+m-1}$, i.e.\ to satisfy
$$
(u\triangleright g)\circ_i (u'\triangleright g')=(u\circ_{g(i)}u')\triangleright(g\circ_ig'),
$$
for any $u\in \EE(n)$, $u'\in \EE(m)$ and $g\in \G_n$, $g'\in \G_{m}$; notice the use of Notation \ref{notat: Dropping pi when understood}.
For instance, $\G$-operads for $\G=\Sigma$ are just the symmetric operads of Definition \ref{defn:Operad}.

Our goal in this subsection is to provide more examples of (general) action operads. For that purpose, we briefly discuss a construction based on fundamental groups of topological $\G$-operads that produces (general) action operads over $\G$.


First, observe that the forgetful functor from $\G$-operads to non-symmetric operads admits a left adjoint which simply adds a free  right $\G$-action,
 $$
(-)_{\G}\colon \begin{Bmatrix}
\text{non-symmetric}\\
\text{operads}
\end{Bmatrix}  \longrightarrow \begin{Bmatrix}
\G\text{-operads}
\end{Bmatrix}, \quad \mathsf{P}\mapsto \mathsf{P}_{\G},
 $$
 where $\mathsf{P}_{\G}(n)=\mathsf{P}(n)\times \G_{n}$ and the operation $\circ_i$ is defined as the composition
 $$
 \begin{tikzcd}[ampersand replacement=\&]
     \mathsf{P}_{\G}(n)\times \mathsf{P}_{\G}(m)\ar[d,"\cong"',"\text{switch}"] \\ 
     \mathsf{P}(n)\times \mathsf{P}(m)\times \G_{n}\times\G_{m}
     \ar[d,"\circ_i\times \circ_i"] \\ \mathsf{P}(n+m-1)\times \G_{n+m-1}\ar[d, equal]\\
     \mathsf{P}_{\G}(n+m-1)
 \end{tikzcd}\quad .
 $$
Of course, the same discussion applies to topological operads.
 
 Applying this left adjoint functor to the (non-symmetric) associative operad $\mathsf{As}$, characterized by $\mathsf{As}(n)=*$ for any $n\geq 1$ (and $\mathsf{As}(0)=\emptyset$), we obtain $\mathsf{As}_{\G}$. A map from $\mathsf{As}_{\G}$ selects basepoints in a coherent way in a $\G$-operad. In fact, 
 \begin{defn} Let $\EE$ be a topological $\G$-operad. Then, a \emph{good $\G$-basepoint} for $\EE$ is a map of topological $\G$-operads $\eta\colon\mathsf{As}^h_{\G}\to \EE$, where $\mathsf{As}^h_{\G}$ is a topological $\G$-operad equipped with a operad map $\mathsf{As}^h_{\G}\to \mathsf{As}_{\G}$ which is a levelwise  homotopy equivalence.
 \end{defn}

For any $k\geq 1$ one can make a choice of basepoint $\mu_k\in \mathsf{As}_{\G}^{h}(k)$ representing the plain $k$-ary multiplication, that is, lying in the connected component corresponding to $(*,e_k)\in \mathsf{As}_{\G}(k)$. By definition, there are paths $ \mu_{n+m-1}\simeq \mu_n\circ_i\mu_m$ in the space $ \mathsf{As}_{\G}^h(n+m-1)$ and homotopies relating natural concatenations of those paths. For that reason, we fix basepoints $\epsilon_k:=\eta(\mu_k)\in \EE(k)$ for paths and loops as \cite[Section 3]{Zhang} in the sequel.


Now, assume that the action of $\G_n$ on $\EE(n)$ is a covering action (see \cite[Section 1.3]{Hatcher}) and that $\EE(n)$ is path-connected. Then, we have a homotopy fiber sequence $\G_n\hookrightarrow \EE(n)\twoheadrightarrow \EE(n)/\G_n$, which induces a short exact sequence of groups
\begin{equation}\label{eqt:ses pi1 of Goperads}
    1\xrightarrow{\;\quad} \pi_1\big(\EE(n),\epsilon_n\big)\xrightarrow{\;\quad} \pi_1\big(\EE(n)/\G_n,[\epsilon_n]\big)\xrightarrow{\;\;\partial_n\;\;} \G_n \xrightarrow{\;\quad} 1,
\end{equation}
and an isomorphism of relative homotopy groups
\begin{equation}\label{eqt:pi1 of Goperads}
    \pi_1\big(\EE(n)/\G_n,[\epsilon_n]\big)\cong \pi_1\big(\EE(n);\epsilon_n,\epsilon_n\triangleright\G_n\big).
\end{equation}
The left-hand side in (\ref{eqt:pi1 of Goperads}) has a group structure given by concatenation of loops, while the right-hand side is easily seen to form a non-symmetric operad. Altogether, we have the following generalization of \cite[Theorem 3.4]{Zhang}:
 \begin{prop} Let $(\G,\pi,\circ_i,\id)$ be a (general) action operad, $\EE$ be a topological $\G$-operad without constants so that: {\rm (i)} $\EE$ is equipped with a good $\G$-basepoint, {\rm (ii)} $\EE(n)$ is path-connected for any $n\geq 1$, and {\rm (iii)} $\G$ acts on $\EE$  via covering actions. Then, the sequence of groups $\pi_1\big(\EE(n)/\G_n,[\epsilon_n]\big)$ together with the maps 
 $$
 \pi_1(\EE(n)/\G_n,[\epsilon_n])\xrightarrow{\;\;\partial_n\;\;} \G_n \xrightarrow{\;\;\pi_n\;\;} \Sigma_n
 $$
 forms a (general) action operad, denoted $\pi_1(\EE,\G)$. Moreover, $\pi_1(\EE,\G)$ lies over $\G$, i.e.\ the connecting maps $\{\partial_n\}_{n\geq 1}$ in {\rm (\ref{eqt:ses pi1 of Goperads})} yield a morphism $\partial\colon\pi_1(\EE,\G)\to \G$ of (general) action operads. 
 \end{prop}
\begin{proof} Let us check Condition (3) of the definition of (general) action operad for $\pi_1(\EE,\G)$. For this task, let us write down the explicit formulas for the group structure and operadic composition obtained via (\ref{eqt:pi1 of Goperads}), i.e.\ for $\gamma,\gamma'\colon [0,1]\to \EE(n)$ sending $0$ to $\epsilon_n$ and $1$ to $\epsilon_n\triangleright \G_n$, we have the concatenation product
$$
(\gamma\cdot \gamma')_{t}:=\begin{cases}
 \gamma'_{2t} & \text{\qquad if $t\in[0,\frac{1}{2}]$},\\[1mm]
 \gamma_{2t-1}\triangleright \partial_n[\gamma'] & \text{\qquad if $t\in[\frac{1}{2},1]$}
\end{cases}
$$
and, for $\beta\colon [0,1]\to \EE(m)$ satisfying the analogous boundary conditions, we have the partial composition product
$$
(\gamma \circ_i \beta)_t:=\begin{cases}
 \theta_{3t}(i) & \text{\quad if $t\in [0,\frac{1}{3}]$},\\[1mm]
\gamma_{3t-1}\circ_i\beta_{3t-1} & \text{\quad if $t\in[\frac{1}{3},\frac{2}{3}]$},\\[1mm]
\theta_{3t-2}^{-1}\big([\gamma](i)\big)\triangleright\partial_{n+m-1}[\gamma\circ_i\beta] & \text{\quad if $t\in[\frac{2}{3},1]$},
\end{cases} 
$$
where $\theta_t(i)$ is a choice of path $\epsilon_{n+m-1}\simeq \epsilon_n\circ_i\epsilon_{m}$ coming from the good $\G$-basepoint of $\EE$ and we abuse notation to denote $[\gamma](i)\equiv (\pi_n\partial_n[\gamma])(i)$. Therefore, via (\ref{eqt:pi1 of Goperads}), the task is to check that the paths $(\gamma\cdot\gamma')\circ_i(\beta\cdot\beta')$ and $(\gamma\circ_{[\gamma'](i)}\beta)\cdot(\gamma'\circ_i\beta')$ are homotopic.
On the one hand, up to reparametrization, the path $(\gamma\cdot\gamma')\circ_i(\beta\cdot\beta')$ is given by:
$$
t\longmapsto\begin{cases}
 \theta_{4t}(i) & \text{\quad if $t\in[0,\frac{1}{4}]$},\\[1mm]
\gamma'_{4t-1}\circ_i\beta'_{4t-1} & \text{\quad if $t\in[\frac{1}{4},\frac{1}{2}]$},\\[1mm]
\big(\gamma_{4t-2}\triangleright\partial_n[\gamma']\big)\circ_i\big(\beta_{4t-2}\triangleright \partial_m[\beta']\big)& \text{\quad if $t\in[\frac{1}{2},\frac{3}{4}]$},\\[1mm]
\theta^{-1}_{4t-3}\big([\gamma\cdot\gamma'](i)\big)\triangleright \partial_{n+m-1}[(\gamma\cdot\gamma')\circ_i(\beta\cdot\beta')] & \text{\quad if $t\in[\frac{3}{4},1]$}.
\end{cases} 
$$
On the other hand, up to reparametrization, the path $(\gamma\circ_{[\gamma'](i)}\beta)\cdot(\gamma'\circ_i\beta')$ is homotopic to:
$$
t\longmapsto\begin{cases}
 \theta_{4t}(i) & \text{if $t\in [0,\frac{1}{4}]$},\\[1mm]
\gamma'_{4t-1}\circ_i\beta'_{4t-1} & \text{if $t\in[\frac{1}{4},\frac{1}{2}]$},\\[1mm]
\big(\gamma_{4t-2}\circ_{[\gamma'](i)}\beta_{4t-2}\big)\triangleright \partial_{n+m-1}[\gamma'\circ_i\beta'] & \text{if $t\in[\frac{1}{2},\frac{3}{4}]$},\\[1mm]
\theta^{-1}_{4t-3}\big([\gamma]([\gamma'](i))\big)\triangleright\partial_{n+m-1}[\gamma\circ_{[\gamma'](i)}\beta]\triangleright \partial_{n+m-1}[\gamma'\circ_i\beta'] & \text{if $t\in[\frac{3}{4},1]$},
\end{cases} 
$$
where the homotopy is induced by the contraction $\theta\big([\gamma'](i)\big) \cdot \theta^{-1}\big([\gamma'](i)\big)\simeq \mathsf{constant}$ and appears at time $t=\frac{1}{2}$.

Comparing these expressions one concludes the claim. To do so, apply the following facts: 
\begin{itemize}
    \item[(a)] $\triangleright$ is an action, $u\triangleright g\triangleright g'=u\triangleright(g\cdot g') $, 
    \item [(b)] the equivariance of the partial composition products on $\EE$ with respect to the $\G$-action, 
$
(u\triangleright g)\circ_i(u'\triangleright g')=(u\circ_{g(i)}u')\triangleright (g\circ_i g'),
$
    \item[(c)] the connecting maps are homomorphisms, $\partial_s[\alpha\cdot\alpha']=\partial_s[\alpha]\cdot\partial_s[\alpha']$,
    \item[(d)] the connecting maps are compatible with partial composition products, 
$
\partial_{n+m-1}[\gamma''\circ_i\beta'']=\partial_n[\gamma'']\circ_i\partial_{m}[\beta''].
$
\end{itemize}
These last two facts about the connecting morphisms $\{\partial_n\}_{n\geq 1}$ can be deduced from their explicit definition via the unique path-lifting property. More specifically, for $t\in [\frac{1}{2},\frac{3}{4}]$ we use (b) and (d), while for $t \in [\frac{3}{4},1]$ we use first (d) and (c), then that property (3) holds for $\G$, and finally (a).

The rest of the axioms can be checked in a similar manner. Notice that (3) is the most involved assumption since it employs all the ingredients in the construction.

\end{proof}

 The next two examples were considered in \cite{Wahl}, \cite{Zhang}, \cite{Corner-Gurski} and \cite{Yau}. The third one is a general action operad, but not an action operad.

\begin{examp}\label{ex: Braid action operad}
    Consider the action operad $\G = \Sigma$, and take $\EE$ to be the little $2$-discs operad $\C_2$ with its natural right $\G$-action: 
    $$
    \begin{tikzcd}[ampersand replacement=\&]
    \C_2(n)\times \Sigma_n \ar[rr] \&\& \C_2(n) \\[-6mm]
    \big((x_1,\dots,x_n),g\big) \ar[rr, mapsto] \&\& \big(x_{g(1)},\dots,x_{g(n)}\big)
    \end{tikzcd}.
    $$
    Now, observe that the little $1$-discs operad $\C_1$ is homotopy equivalent to the symmetric operad $\mathsf{Ass}=\mathsf{As}_{\Sigma}$, and the inclusion $\C_1\to \C_2$ is a good $\G$-basepoint. The operad $\Br = \pi_1(\C_2,\Sigma)$ is the action operad of \emph{braid groups}.
\end{examp}

\begin{examp}
    The previous example remains valid if we replace the little $2$-discs operad $\C_2$ by its framed version $\C^{\fr}_2\simeq \C_2\ltimes \mathsf{SO}(2)$\footnote{Note that \cite{Wahl} denotes this operad by $ \C_2\rtimes \mathsf{SO}(2)$ instead of $\C_2\ltimes \mathsf{SO}(2)$. Our choice of notation matches the consolidated convention in group theory to denote semidirect products.
    }. The resulting operad $\RBr = \pi_1(\C^{\fr}_2,\Sigma)$ is the action operad of \emph{ribbon braid groups}. Recall that 
    $$\RBr_k\cong \Br_k\ltimes\, \mathbb{Z}^{\times k},$$
    where $\mathbb{Z}^{\times k}$ accounts for the number of full twists on each ribbon. 
\end{examp}

\begin{examp}
    Consider the general action operad $\G = \Sigma^{\pm}$, and take $\EE$ to be the framed little $2$-discs operad $\C^\fr_2$ with the following $\G$-action:
    $$
    \begin{tikzcd}[ampersand replacement=\&]
    \C_2^{\fr}(n)\times \Sigma_n^{\pm} \ar[rr] \&\& \C_2^{\fr}(n) \\[-6mm]
    \big((x_1,\dots,x_n),(g,A)\big) \ar[rr, mapsto] \&\& \big(x^A_{g(1)},\dots,x^A_{g(n)}\big)
    \end{tikzcd},
    $$
    where $x_i^A = x_i$ for any $i\notin A$ and $x_i^A$ is the precomposition of the embedding $x_i\colon D^2\to D^2$ with a $\pi$-rotation otherwise. Now, observe that the ``unoriented little $1$-discs operad'' $\C^{\mathsf{un}}_1\simeq \C_1\ltimes \OO(1)$ is homotopy equivalent to the operad $\mathsf{As}_{\Sigma^{\pm}}$ since there is a homotopy equivalence 
    $$
    \C_1^{\mathsf{un}}(k)\simeq \C_1(k)\times \OO(1)^{\times k} 
    $$ 
    and bijections
    $$
      \mathsf{As}_{\Sigma^{\pm}}(k)\cong \Sigma_{k}^{\pm}\cong \Sigma_k\times \{\pm \id\}^{\times k}.
    $$
    Hence, the inclusion $\C^{\mathsf{un}}_1\to \C^{\fr}_2$ determined by the inclusion $\C_1\to \C_2$ and the homomorphism $\OO(1)\to \mathsf{SO}(2)$, $-\id\mapsto e^{i\pi}$, is a good $\G$-basepoint. The operad $\URBr = \pi_1(\C^{\fr}_2,\Sigma^{\pm})$ is the general action operad of \emph{unoriented ribbon braid groups}. One can identify unoriented ribbon braid groups in a similar manner to the case of plain ribbon braids, i.e.\  $\URBr_{k}\cong \Br_k\ltimes\, \mathbb{Z}^{\times k}$, but now $\mathbb{Z}^{\times k}$ accounts  for the number of half-twists on each ribbon. With such a description, the canonical morphism $\URBr\to \Sigma^{\pm}$ sends $(\beta_k;n_1,\dots,n_k)$ to $(\pi(\beta_k),A_{\underline{n}})$, where $\pi(\beta_k)$ is the underlying permutation of the braid $\beta_k$ and $A_{\underline{n}}$ is given by the set of $i$'s so that $n_i$ is odd. See Figure \ref{fig:FramedUnordConfAndRibbon} for an illustration of the difference between $\RBr$ and $\URBr$.

    \begin{figure}[h]
    \centering
    \includegraphics
    {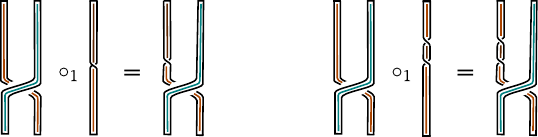}
    \caption{From left to right, the operation $\sigma\circ_1\tau=(\sigma;1,0)$ in $\URBr_2\cong \Br_2\ltimes\, \mathbb{Z}^{\times 2}$ and $\sigma\circ_1\widehat{\tau}=(\sigma;1,0)$ in $\RBr_2\cong \Br_2\ltimes\, \mathbb{Z}^{\times 2}$.
    }
    \label{fig:FramedUnordConfAndRibbon}
    \end{figure}
\end{examp}

By construction, there is a commutative diagram of general action operads
\begin{equation}\label{eq:braid_maps}
\begin{tikzcd}[ampersand replacement=\&]
   \Br \ar[rd, bend right=40]\ar[r] \& \RBr \ar[r] \ar[d] \& \URBr \ar[d] \&[-4mm] (\beta_k;n_1,\dots,n_k) \ar[r, mapsto]\ar[d,mapsto] \& (\beta_k;2n_1,\dots,2n_k)\ar[d,mapsto]\\
   \& \Sigma \ar[r] \& \Sigma^{\pm} \& \pi(\beta_k) \ar[r, mapsto] \& (\pi(\beta_k),\emptyset)
\end{tikzcd}.
\end{equation}
Note that $\RBr$ is an action operad, while $\URBr$ equipped with the obvious projection $\URBr\to \Sigma$, $(\beta_{k};n_1,\dots,n_k)\mapsto \pi(\beta_{k})$ is just a general action operad since, for example, the square
$$
\begin{tikzcd}[ampersand replacement=\&]
\URBr_1\times \URBr_{2}\ar[r,"\circ_1"] \ar[d] \& \URBr_2\ar[d]\\
\Sigma_1\times \Sigma_2 \ar[r,"\circ_1"'] \& \Sigma_2
\end{tikzcd}
$$
does not commute. For instance, take $(\tau,e_2)=\big((e_1;1),(e_2;0,0)\big)\in \URBr_1\times \URBr_2$ and follow the two directions to obtain $e_2\neq (1,2)$ in $\Sigma_2$ (see Figure \ref{fig:Composition in twisted braid}).

\begin{rem}
Under the correspondence between general action operads and operadic cloning systems, the unordered ribbon braided operad $\URBr$ corresponds to the cloning system of twisted braid groups $\{B^{twist}_n\}_n$ described in \cite[Example~4.2]{Zaremsky}. 

In \cite[3.5.3]{Th}, Thumann introduces the following braided operad $\EE$: Consider first the free braided operad generated in arity $1$ by an operation $\tau$ and in arity $2$ by an operation $\sigma$. The operad $\EE$ is defined as the result of quotienting this operad by the relation $\tau \circ_1 e_2 = (\sigma\circ_1 \tau)\circ_2 \tau$. This operad is isomorphic to the underlying operad of $\URBr$, and the braid action comes from the maps \eqref{eq:braid_maps}: the operation $\tau$ is the half-twist of a single ribbon, while the operation $\sigma$ is the generator of the braid group. The relation is depicted in Figure \ref{fig:Composition in twisted braid}.

 \begin{figure}[h]
    \centering
    \includegraphics[width=6cm]{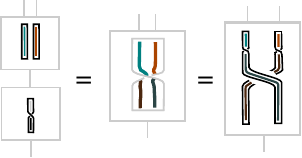}
    \caption{The relation $\tau\circ_1 e_2= (\sigma\circ_1 \tau)\circ_2\tau$ seen in the general action operad $\URBr$.}
    \label{fig:Composition in twisted braid}
    \end{figure}
\end{rem}


\section{A case study: the correspondence for braid groups
}
\label{sec:braids}
Before we embark on the proof of our main theorem in its full generality, we explain the situation with the special case of braid groups. As will become apparent, the general argument is an abstraction of the ideas introduced here, and will be treated in the next two sections.

Let $\Br=\big(\Br_{\bullet},\pi\big)$ be the collection of all braid groups equipped with the canonical projection homomorphisms $\pi_n\colon\Br_{n}\to \Sigma_n$. 

On the one hand, recall from Example~\ref{examp:cloning} the (bilateral) cloning system for braid groups $\Br=(\Br_{\bullet},\pi,\iota,\zeta,\kappa)$, where the maps $\iota$ and $\zeta$ add one strand on the right or the left, respectively, and $\kappa$ are the cloning maps given by duplicating a strand.

On the other hand, Example \ref{ex: Braid action operad} endows the collection $\Br_{\bullet}$ with an action operad structure with respect to the ``substitution maps'' $\circ_{j}\colon \Br_{n+1}\times\Br_m\to \Br_{n+m}$, which correspond to replacing the $j$-th strand of the first braid by the second braid. Figure  \ref{fig:CloningToOperad} which contains a depiction of this operation; for a more detailed discussion, see \cite[Definition 1.6 and Example 1.12(2)]{Corner-Gurski} for details.

We now explain how to obtain one structure from the other, illustrating the main ideas of the procedure with several pictures.

\subsection{From the action operad to the (bilateral) cloning system.}
The morphisms $\iota$ and the cloning maps $\kappa$ for the cloning system are obtained by using the identity $e_2\in \Br_2$ and the composition products $\circ_i$, as explained in Figures~\ref{fig:OperadToIota} and~\ref{fig:OperadToKappa}, respectively. Thus one obtains the cloning system $\Br$ described in Example~\ref{examp:cloning}. 
\begin{figure}
    \centering
    \includegraphics[width=10cm]{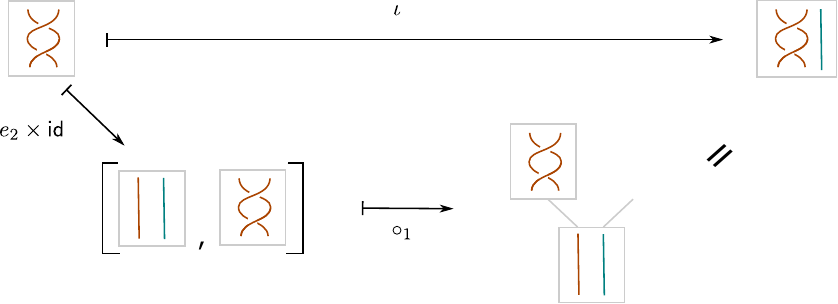}
    \caption{Constructing the map $\iota$. Insert the given braid in the first strand of $e_2$.}
    \label{fig:OperadToIota}
    \end{figure} 
        
    \begin{figure}
    \centering
    \includegraphics[width=10cm]{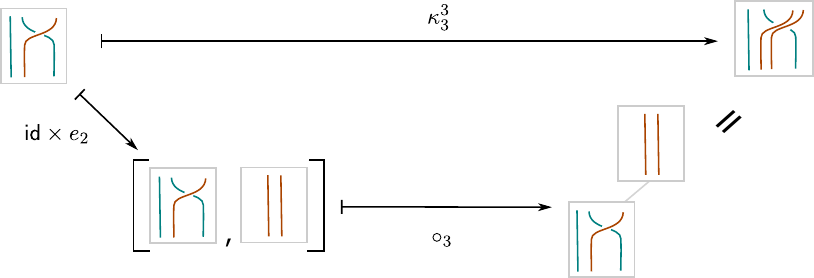}
    \caption{Constructing the map $\kappa_{j}$. Replace the $j$th strand of the given braid by $e_2$.}
    \label{fig:OperadToKappa}
    \end{figure}
In order to get a bilateral cloning system, the maps $\zeta$ are defined similarly to the maps $\iota$, but replacing the second strand of $e_2$ instead of the first one, that is, using $\circ_2$ instead of $\circ_1$; see Figure~\ref{fig:OperadToZeta}.
\begin{figure}
    \centering
    \includegraphics[width=10cm]{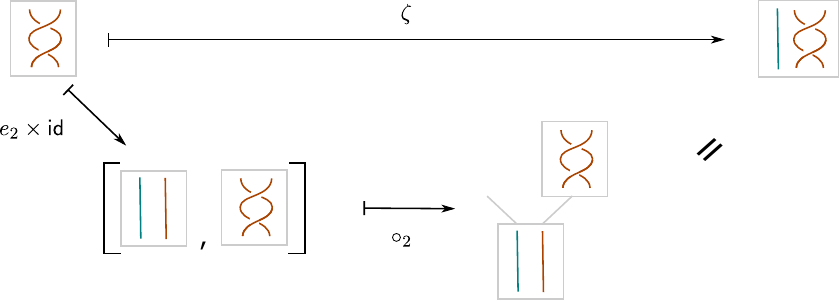}
    \caption{Constructing the map $\zeta$. Insert the given braid in the second strand of $e_2$.}
    \label{fig:OperadToZeta}
    \end{figure}    

\subsection{From the (operadic) bilateral cloning system to the action operad} We now explain how to use the bilateral cloning system structure on $\Br$ in order to build an action operad structure on $\Br_{\bullet}$. To see how the composition product maps $\circ_j$ are obtained, we proceed as follows. First, we replicate $m$ times the $j$th strand of the first braid using the cloning maps, where $m$ is the number of strands of the second braid. Then we add, by using $\iota$ and $\zeta$ strands to the right and left, respectively, of the second braid, so that it has the same number of strands as the cloned braid. Finally, we just multiply the two braids obtained this way. Figure~\ref{fig:CloningToOperad} shows an example of this construction. 
 \begin{figure}
    \centering
    \includegraphics[width=10cm]{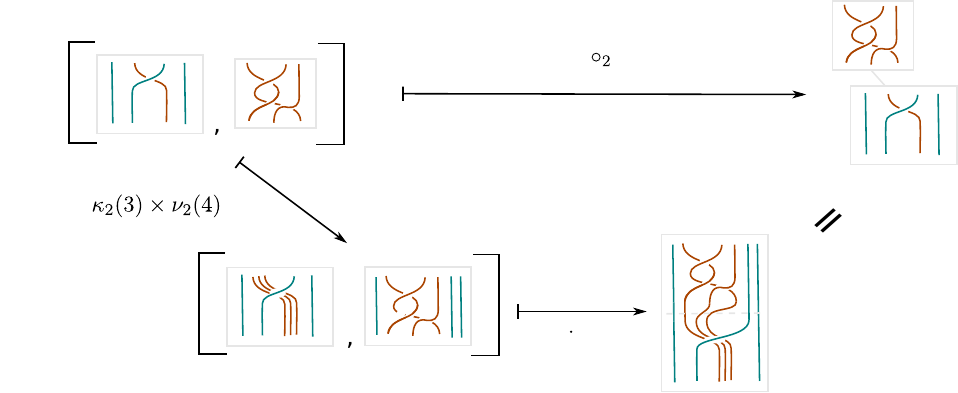}
    \caption{Constructing the substitution maps $\circ_j$ from the cloning system data.
    }
    \label{fig:CloningToOperad}
    \end{figure}

One can check that the maps $\circ_j$ built above satisfy all the properties required for  endowing $\Br_{\bullet}$ with an operad structure on sets. For instance, Figure~\ref{fig:AssociativityI} and Figure~\ref{fig:AssociativityII} show the verification of the associativity axiom for a particular example. 
 \begin{figure}
    \centering
    \includegraphics[width=7cm]{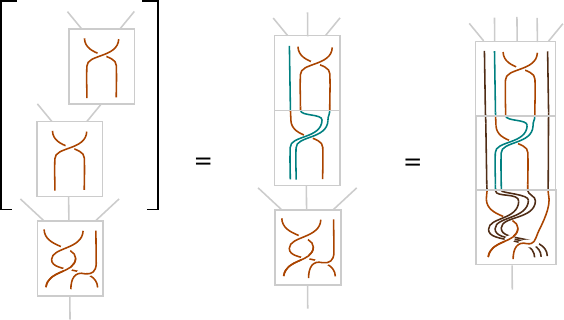}
    \caption{First half of the associativity axiom. The colors represent the application of $\iota$, $\zeta$ and $\kappa$ in each step.}
    \label{fig:AssociativityI}
    \end{figure}
    \begin{figure}
    \centering
    \includegraphics[width=7cm]{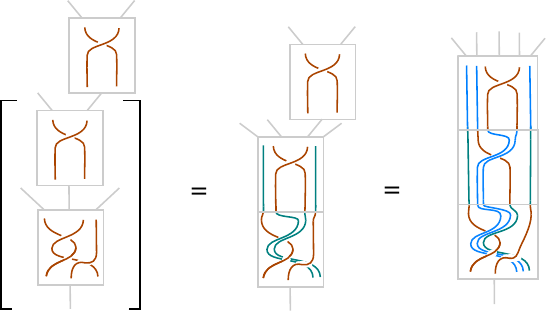}
    \caption{Second half of the associativity axiom.
    }
    \label{fig:AssociativityII}
    \end{figure}
    
Finally, a simple computation, depicted in Figure~\ref{fig:Compatibility}, shows that the maps $\circ_j$ are compatible with the underlying group structure of the braid groups. The canonical projection maps $\pi_n\colon \Br_n\to \Sigma_n$ respect operadic and group structures, e.g.\ compare Figure \ref{fig:CompositionProduct Sigma} and Figure \ref{fig:CloningToOperad}. Thus, we obtain that $(\Br_{\bullet},\pi, \{\circ_i\}_i, e_1)$ is indeed an action operad. 
 \begin{figure}
    \centering
    \includegraphics[width=11cm]{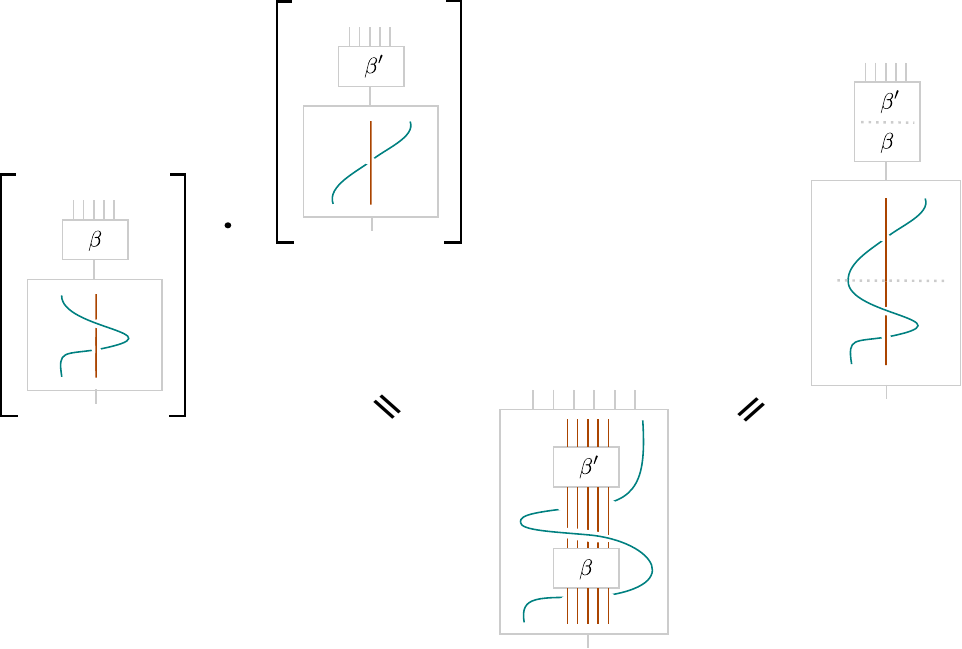}
    \caption{Compatibility axiom of the action operad.}
    \label{fig:Compatibility}
    \end{figure}

\section{From action operads to cloning systems}
\label{sec:operadtocloning}
In this section, we prove one of the implications of Theorem \ref{thm:main2} in  full generality; more concretely, we explain how to obtain a (restricted) operadic cloning system from an arbitrary action operad.

Let $\G=(\G_{\bullet},\pi, \{\circ_i\}_i, \id)$ be an action operad, and
 denote by $e_n\in \G_n$ the identity element of  $\G_n$. If $g\in \G_n$ and $i\in \{1,\ldots,n\}$, we denote by $g(i) = \pi_n(g)(i)$. From the action operad structure, one can define the following maps
$$
\kappa^n_j,\iota_n,\zeta_n\colon \G_n\longrightarrow \G_{n+1}
$$
by setting
$$
\kappa^n_j(g) = g\circ_j e_2,\quad
\zeta_n(g) = e_2\circ_2 g\quad\mbox{ and }\quad
\iota_n(g) = e_2\circ_1 g.
$$
The following observation will be useful in what follows:
\begin{rem}\label{rem:properties} Note that in an action operad we always have that $e_n\circ_i e_m = e_{n+m-1}$ because $e_n\circ_i e_m = (e_n\cdot e_n) \circ_i (e_m\cdot e_m) = (e_n\circ_i e_m)\cdot (e_n\circ_i e_m)$. Moreover, the following hold
$$
    \nu_j^n(m)(g) = e_m\circ_j g\quad\mbox{ and }\quad
    \kappa_j^n(m)(g) = g\circ_j e_m,
$$
where $\kappa_j^n(m)$ and $\nu_j^n(m)$ are defined as in Definition~\ref{defn:OperadicCloningSystem}. Notice that, from the first identity, one deduces that
$$
    \iota_n(r)(g)=e_{r+1}\circ_1 g \quad \text{ and }\quad \zeta_n(l)(g)=e_{l+1}\circ_{l+1}g.
$$
\end{rem}

\begin{thm}\label{thm:OperadtoCS} 
Let $\G=(\G_{\bullet},\pi, \{\circ_i\}_i, \id)$ be an action operad. Then, the quintuple $(\G_{\bullet}, \iota, \zeta, \kappa,\pi)$, with $\iota$, $\zeta$ and $\kappa$ as defined above, is a restricted operadic  cloning system. If $\G=(\G_{\bullet},\pi, \{\circ_i\}_i, \id)$ is a general action operad, then $(\G_{\bullet}, \iota, \zeta, \kappa,\pi)$ is an operadic cloning system.
\end{thm}
\begin{proof}
We will prove it in several steps. We will make essential use of  the fact that~$\G$ is an action operad, the definition of the morphisms $\iota$, $\zeta$ and $\kappa$, and Remark~\ref{rem:properties}. First, we prove that the maps $\iota$ and $\zeta$ are homomorphisms; indeed,
\[\iota(f\cdot g) = e_2\circ_1 (f\cdot g) =
(e_2\cdot e_2)\circ_1(f\cdot g)=
(e_2\circ_{1} f)\cdot (e_2\circ_1 g) = \iota(f)\cdot \iota(g),\]
\[\zeta(f\cdot g) = e_2\circ_2 (f\cdot g) =
(e_2\cdot e_2)\circ_2(f\cdot g)=
(e_2\circ_{2} f)\cdot (e_2\circ_2 g) = \zeta(f)\cdot \zeta(g).\]
We now show that the quadruple $(\G_\bullet,\iota,\kappa,\pi)$ is a cloning system by checking that it satisfies properties (i)--(v) of Definition~\ref{defn:CloningSystem}.
\begin{itemize}
\item[(i)] $\pi_{n+1}\circ \iota_n = \lambda_n\circ\pi_n$. For every $g\in \G_n$ we have that
\begin{align*}
\pi_{n+1}(\iota_n(g)) = \pi_{n+1}(e_2\circ_1 g) = \pi_{2}(e_2)\circ_1 \pi_{n}(g) = e_2\circ_1 \pi_{n}(g) = \lambda_n(\pi_n(g)).
\end{align*}
\item[(ii+)] $\pi_{n+1}\circ \kappa^n_j = c_j^n\circ\pi_n$. For every $g\in\G_n$ and all $1\le j\le n$ we have that
\begin{align*}
\pi_{n+1}(\kappa^n_j (g)) = \pi_{n+1}(g\circ_j e_2) = \pi_{n}(g)\circ_j \pi_{2}(e_2) = \pi_{n}(g)\circ_j e_2 = c^n_j(\pi_n(g)).
\end{align*}
\item[(iii)] $\iota_{n+1}\circ \kappa_j^n = \kappa^{n+1}_j\circ\iota_n$. For every $g\in\G_n$ and all $1\le j\le n$ we have that
\begin{align*}
\iota_{n+1}\circ \kappa_j^n(g) &= \iota_{n+1}(g\circ_j e_2) = e_2\circ_1(g\circ_j e_2)  \\&=  ( e_2\circ_1 g)\circ_j e_2 =  \iota_n(g)\circ_j e_2 = \kappa^{n+1}_j(\iota_n(g)).\end{align*}
\item[(iv)] $\kappa^{n+1}_l\circ\kappa_j^n = \kappa_{j+1}^{n+1}\circ \kappa_l^n$. For every $g\in\G_n$ and all $l< j\le n$ we have that
\begin{align*}
\kappa^{n+1}_l(\kappa^n_j(g)) &= \kappa^{n+1}_l(g\circ_j e_2) = (g\circ_j e_2) \circ_l e_2 \\
&=(g\circ_l e_2) \circ_{j+1} e_2 = \kappa^n_l(g) \circ_{j+1} e_2 = \kappa^{n+1}_{j+1}(\kappa^n_l(g)).
\end{align*}
\item[(v)] $\kappa_j^n(g\cdot h) = \kappa^n_{h(j)}(g)\cdot \kappa_j^n(h)$ for every $g,h \in \G_n$. For all $j\le n$ we have that
\begin{align*}
\kappa_j^n(g\cdot h) &= (g\cdot h)\circ_j e_2 = (g\cdot h)\circ_j (e_2\cdot e_2) \\
&= (g\circ_{h(j)} e_2)\cdot (h\circ_j e_2)
= \kappa^n_{h(j)}(g)\cdot \kappa_j^n(h)
\end{align*}
\end{itemize}
Finally, we prove that $(\G_{\bullet}, \iota, \zeta, \kappa,\pi)$ is a restricted operadic cloning system by checking properties (i'), (iii'), (vi), (vii), (vii'), (iv+) of Definition~\ref{defn:NCCloningSystem}, and (viii), (ix) of Definition~\ref{defn:OperadicCloningSystem}. 
First, properties (i'), (iii') and (vii') are proved as properties (i), (iii) and (vii) simply by replacing $\circ_1$ by $\circ_2$ and $\iota$ by $\zeta$.
\begin{itemize}
\item[(vi)] $\zeta_{n+1}\circ \iota_n = \iota_{n+1}\circ\zeta_{n}$. For every $g\in \G_n$ and all $n\ge 1$ we have that
\begin{align*}
    \zeta_{n+1}(\iota_n(g)) &= e_2\circ_2(e_2\circ_1 g) = (e_2\circ_2 e_2)\circ_2 g = e_3\circ_2 g \\&= (e_2\circ_1 e_2)\circ_2 g = e_2\circ_1 (e_2\circ_2 g) = \iota_{n+1}(\zeta_{n}(g)).
\end{align*}
\item[(vii)] $\iota_{n+1}\circ\iota_n = \kappa^{n+1}_{n+1}\circ \iota_n$. For every $g\in \G_n$ and all $n\ge 1$ we have that
\begin{align*}
\iota_{n+1}(\iota_{n}(g)) &= e_2\circ_1(e_2\circ_1 g) = (e_2\circ_1 e_2)\circ_1 g = e_3\circ_1 g, \\
\kappa^{n+1}_{n+1}(\iota_n(g)) &= (e_2\circ_1 g)\circ_{n+1} e_2 = (e_2\circ_2 e_2)\circ_1 g = e_3\circ_1 g.
\end{align*}
\item[(iv+)] $\kappa^{n+1}_{j+1}\circ \kappa^n_{j} = \kappa_{j}^{n+1}\circ \kappa_j^n$. For all $g\in \G_n$ and all $1\le j\le n$ we have that 
\begin{align*}
\kappa^{n+1}_{j+1}(\kappa^n_{j}(g)) &=
(g\circ_j e_2)\circ_{j+1} e_2 = g\circ_j(e_2\circ_2 e_2) = g\circ_2 e_3  \\ &= g\circ_j (e_2\circ_1 e_2) = (g\circ_j e_2)\circ_j e_2 = \kappa_{j}^{n+1}(\kappa_j^n(g)).
\end{align*}
\item[(viii)] $\kappa_j^n(m)(g)\cdot \nu^m_j(n)(h) = \nu^m_{g(j)}(n)(h) \cdot \kappa_j^n(m)(g)$ for every $g\in\G_n$ and $h\in\G_m$. For all $m,n\geq 1$ we have that
\begin{align*}
\kappa_j^n(m)(g)\cdot \nu^m_j(n)(h) &=
(g\circ_j e_m)\cdot (e_n\circ_j h) \\
&= (g\cdot e_n)\circ_{j} (e_m\cdot h) \\
&= (e_n\cdot g)\circ_{j} (h\cdot e_m) \\
&= (e_n\circ_{g(j)} h)\cdot (g\circ_{j} e_m) \\
&= \nu^m_{g(j)}(n)(h)\cdot \kappa_{j}^n(m)(g).
\end{align*}

\item[(ix)] $\iota_n(m)(g)\cdot \zeta_m(n)(h) = \zeta_m(n)(h)\cdot \iota_n(m)(g)$ for every $g\in\G_n$ and $h\in\G_m$. For all $m,n\ge 1$ we have that
\begin{align*}
    \iota_n(m)(g)\cdot \zeta_m(n)(h) &= (e_{m+1}\circ_{1} g)\cdot (e_{n+1}\circ_{n+1} h) \\
    &= (e_{m+1}\circ_1 g)\cdot((e_2\circ_1 e_n)\circ_{n+1} h) \\
    &= (e_{m+1}\circ_1 g)\cdot((e_2\circ_2 h)\circ_1 e_n) \\
    &= (e_{m+1}\cdot (e_2\circ_2 h))\circ_1 (g\cdot e_n) \\
    &= ((e_2\circ_2 h)\cdot e_{m+1})\circ_1 (e_n\cdot g) \\
    &= ((e_2\circ_2 h)\cdot (e_{2}\circ_2 e_m))\circ_1 (e_n\cdot g) \\
    &= ((e_2\circ_2 h)\circ_1 e_n)\cdot ((e_2\circ_2e_m)\circ_1 g) \\
    &= ((e_2\circ_1 e_n)\circ_{n+1} h)\cdot ((e_{m+1})\circ_1 g) \\
    &= ((e_{n+1})\circ_{n+1} h)\cdot ((e_{m+1})\circ_1 g) \\
    &= \zeta_m(n)(h)\cdot \iota_n(m)(g).
\end{align*}
\end{itemize}
Therefore, $(\G_{\bullet}, \iota, \zeta, \kappa,\pi)$ is a restricted operadic cloning system as we wanted to show.

If $(\G_{\bullet}, \iota, \zeta, \kappa,\pi,\id)$ were a general action operad, then the proof of Condition (ii+) would restrict instead to a proof of Condition (ii).
\end{proof}

\section{From cloning systems to action operads}\label{sect:FromCStoActionOperads}
We now explain how to construct (general) action operads from an operadic cloning system. Let $\G=(\G_{\bullet},\iota,\zeta,\kappa,\pi)$ be an operadic cloning system. The following identities can be derived from the identities in the definition of an operadic cloning system (recall from Notation~\ref{notat: generalized structure maps of a bilateral cloning system} the construction of the maps $\kappa_i^n(m)$ and $\nu_i^n(m)$):
\begin{align}\label{eqt:6.1}
  \kappa_i^{n+m-1}(l)\circ \kappa_j^n(m) &=
\begin{cases}
    \kappa_{j}^n(m+l-1) & \text{if $j\leq i < j+m$}, \\[1mm]
    \kappa_{j+l-1}^{n+l-1}(m)\circ\kappa_i^n(l) & \text{if $i<j$}.
\end{cases}  
\\[1em]
\label{eqt:6.2}
    \kappa_j^{n+m-1}(l)\circ \nu^n_i(m) &= \begin{cases}
		\nu^{n+l-1}_i(m)\circ \kappa_{j-i+1}^n(l) & \text{if $i\leq j< i+n$},\\[1mm]
		\nu^{n}_{i+l-1}(m+l-1) & \text{if $j< i$}, \\[1mm]
		\nu^{n}_i(m+l-1) & \text{if $j\geq i+n$},
		\end{cases}
\\[1em]
\label{eqt:6.3}
    \nu_j^{n+m-1}(l)\circ \nu_i^n(m)& =\nu_{j+i-1}^n(m+l-1),
\\[1em]
\label{eqt:6.4}
    \kappa_j^n(m)(f\cdot g) &= \kappa_{g(j)}^n(m)(f)\cdot \kappa_j^n(m)(g) \qquad \mbox{if } 1\leq j\leq n,
\end{align}
\begin{multline}\label{eqt:6.5}
        \nu_j^l(n+m-1)(g)\cdot \nu_{i+l-1}^m(n+l-1)(h) \\ =  \nu_{i+l-1}^m(n+l-1)(h)\cdot \nu_j^l(n+m-1)(g)\qquad \mbox{if $j<i$}
\end{multline}

Note that $\nu^n_i(m)$ is a group homomorphism, that $\nu^n_i(m)(g)$ acts trivially on all $j<i$ and all $j\geq i+m$ through $\pi_{n+m-1}$ and that $\kappa^n_i(m)(e_n) = e_{n+m-1}$, because
\[\kappa^n_i(e_n) = \kappa^n_i(e_n\cdot e_n) = \kappa^n_i(e_n)\cdot \kappa^n_i(e_n)\]
and therefore $\kappa^n_i(e_n) = e_{n+1}$.

\begin{rem} In order to deduce the previous identities one uses:
\begin{align*}
(\ref{eqt:6.1})&\leftrightsquigarrow \text{(iv)/(iv+),} && & (\ref{eqt:6.3})&\leftrightsquigarrow \text{(vi),}\\
&& (\ref{eqt:6.2})&\leftrightsquigarrow \text{(iii)/(iii'), (vii)/(vii') and (vi),} 
\\
 (\ref{eqt:6.4})&\leftrightsquigarrow \text{(v),} & &&(\ref{eqt:6.5})&\leftrightsquigarrow \text{(ix).}
\end{align*}
\end{rem}

In order to define an action operad structure on $\G_{\bullet}$ and since we already have a map $\pi\colon \G_{\bullet}\to \Sigma_{\bullet}$, it is enough to define the partial composition products $\{\circ_i\}_i$, which we do as follows. The map
\[
\circ_i\colon \G_n\times \G_m\longrightarrow \G_{n+m-1}
\]
is the following composition
\[\begin{tikzcd}
\G_{n}\times \G_{m}\ar[r,"\kappa_{i}^n(m)\times \nu_i^m(n)"] &[15mm]
\G_{n+m-1}\times\G_{n+m-1} \ar[r,"\cdot"] &
\G_{n+m-1}
\end{tikzcd},\]
that is, $f\circ_i g := \kappa_i^n(m)(f)\cdot \nu^m_i(n)(g)$, for every $f\in\G_n$ and $g\in \G_m$; see Figure \ref{fig:CloningToOperad}. We also set $\id := e_1\in \G_1$.

\begin{prop}\label{prop:CloningtoOper}
    Let $\G=(\G_{\bullet}, \iota, \zeta, \kappa,\pi)$ be a restricted operadic cloning system. Then the quadruple $(\G_{\bullet},\pi, \{\circ_i\}_i, \id)$, with $\{\circ_i\}_i$ as defined above is an action operad. If the restricted condition is dropped, then we obtain a general action operad.
\end{prop}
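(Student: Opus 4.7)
The plan is to verify the four ingredients of Definition~\ref{def:actionoperad} in turn: the unit laws for $\circ_i$, associativity of $\circ_i$ in its three cases, that $\pi$ is a map of operads, and the equivariance between $\cdot$ and $\circ_i$. Two facts will be used throughout. First, axiom (v) applied to $e_n=e_n\cdot e_n$ gives $\kappa^n_j(e_n)=e_{n+1}$, and iterating yields $\kappa^n_i(m)(e_n)=e_{n+m-1}$. Second, $\nu^m_i(n)$ is a composite of the homomorphisms $\iota$ and $\zeta$, and hence is itself a group homomorphism. Combined with the empty-composition conventions (so that $\kappa^n_i(1)$ and $\nu^m_1(1)$ are identity maps), both unit laws $\id\circ_1 g=g$ and $f\circ_i\id=f$ follow by a direct reading of the definition of $\circ_i$.

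To show that $\pi$ is a map of operads, iterating axioms (i), (i'), (ii) gives that $\pi$ intertwines $\kappa^n_i(m)$ and $\nu^m_i(n)$ with the corresponding constructions in $\Sigma_\bullet$. A direct verification then shows that the present construction, when applied to the cloning system of symmetric groups, recovers the standard operad structure on $\Sigma_\bullet$, whence $\pi(f\circ_i g)=\pi(f)\circ_i\pi(g)$.

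The main technical step is associativity. Expanding $(f\circ_i g)\circ_j h$ with $f\in\G_n$, $g\in\G_m$, $h\in\G_l$ produces
\[
\kappa^{n+m-1}_j(l)\bigl(\kappa^n_i(m)(f)\cdot\nu^m_i(n)(g)\bigr)\cdot\nu^l_j(n+m-1)(h).
\]
I would then distribute the outer $\kappa$ over the inner product using the generalized cloning-of-a-product rule, and apply the derived identities stated at the beginning of this section for $\kappa\circ\kappa$, $\kappa\circ\nu$, and $\nu\circ\nu$. The three branches of the derived $\kappa\circ\nu$ identity match, respectively, the three cases of the associativity axiom. The middle case $i\le j<i+m$, where the outer plug-in lands inside the substituted element, is the most delicate: there the first branch of $\kappa\circ\nu$ combined with the $\nu\circ\nu$ identity and the homomorphism property of $\nu^{m+l-1}_i(n)$ reassembles the four emergent factors into $\kappa^n_i(m+l-1)(f)\cdot\nu^{m+l-1}_i(n)(g\circ_{j-i+1}h)=f\circ_i(g\circ_{j-i+1}h)$. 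The side cases $j<i$ and $j\ge i+m$ reduce to analogous but simpler index reshuffling with the second and third branches of $\kappa\circ\nu$.

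Finally, the equivariance condition is verified by applying the generalized cloning-of-a-product rule to $\kappa^n_i(m)(f\cdot f')$ and the homomorphism property of $\nu^m_i(n)$ to $g\cdot g'$, producing
\[
\kappa^n_{f'(i)}(m)(f)\cdot\kappa^n_i(m)(f')\cdot\nu^m_i(n)(g)\cdot\nu^m_i(n)(g').
\]
Axiom (viii) then allows one to commute the two middle factors, replacing them by $\nu^m_{f'(i)}(n)(g)\cdot\kappa^n_i(m)(f')$, after which the four factors group into $(f\circ_{f'(i)}g)\cdot(f'\circ_i g')$. The principal obstacle throughout is the index bookkeeping, especially in the middle subcase of associativity; beyond that, the argument is a systematic application of the derived identities collected at the start of Section~\ref{sect:FromCStoActionOperads} together with axioms (v) and (viii) of a bilateral cloning system.
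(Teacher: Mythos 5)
Your proposal is correct and follows essentially the same route as the paper's proof: the same unit-law check, the same expansion of $(f\circ_i g)\circ_j h$ via the generalized cloning-of-a-product rule followed by the three-case analysis using the derived $\kappa\circ\kappa$, $\kappa\circ\nu$ and $\nu\circ\nu$ identities, and the same use of axiom (viii) to commute the middle factors in the equivariance step. The only difference is that you also spell out why $\pi$ is a map of operads, a point the paper leaves implicit.
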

\begin{proof} First, we check that $e_1$ is indeed a unit for the partial composition product. For every $f\in\G_n$ and every $g\in \G_m$, use the observations below (\ref{eqt:6.5}) to obtain
\begin{align*}
    f\circ_i e_1 &:= \kappa^n_i(1)(f)\cdot \nu_i^1(n)(e_1) = f\cdot e_n = f, \\
    e_1\circ_1 g &:= \kappa^1_1(m)(e_1) \cdot \nu_1^m(1)(g) = e_m\cdot g = g.
\end{align*}
Second, we show the associativity for the partial composition product. Let $f\in \G_n$, $g\in \G_m$ and $h\in \G_l$. Then, using Condition (v) of the definition of cloning system, 
    \begin{align}\notag
        &(f\circ_i g)\circ_j h \\ \notag &:=(\kappa_i^n(m)(f)\cdot \nu^m_i(n)(g))\circ_j h \\ \notag
&:= \kappa^{n+m-1}_j(l)\Big(\kappa_i^n(m)(f)\cdot \nu_i^m(n)(g)\Big)\cdot \nu_j^l(n+m-1)(h) \\ \label{eq:309}
&=
\Big(\kappa_{g^*(j)}^{n+m-1}(l)(\kappa_i^n(m)(f))\Big)\cdot \Big(\kappa_{j}^{n+m-1}(l)(\nu_i^m(n)(g))\Big)\cdot \Big(\nu_j^l(n+m-1)(h)\Big), 
\end{align}
where we denote $g^*=\nu_i^m(n)(g)$. Depending on the indices $i$ and $j$, we have to consider the following three cases.

\bigskip
\noindent {\it Case 1.} Suppose first that $j<i$. In this case $g^*(j) = j$ and we have that 
\begin{align*}
    \kappa_j^{n+m-1}(l)\circ \kappa_i^n(m) &= \kappa_{i+l-1}^{n+l-1}(m)\circ \kappa_j^n(l),\\[2mm]
    \kappa_j^{n+m-1}(l)\circ \nu_i^m(n) &= \nu_{i+l-1}^m(n+l-1)
\end{align*}
by (\ref{eqt:6.1}) and (\ref{eqt:6.2}).
Then \eqref{eq:309} becomes
\[\Big(\kappa_{i+l-1}^{n+l-1}(m)(\kappa_j^n(l)(f))\Big)\cdot \Big(\nu_{i+l-1}^m(n+l-1)(g)\Big)\cdot \Big(\nu_j^l(n+m-1)(h)\Big). \]
On the other hand, again by (v), we have that
    \begin{align}\notag 
        &(f\circ_j h)\circ_{i+l-1} g  \\ \notag
&:=(\kappa_j^n(l)(f)\cdot \nu_j^l(n)(h))\circ_{i+l-1} g \\ \notag      
&:= \kappa_{i+l-1}^{n+l-1}(m)\Big(\kappa_j^n(l)(f)\cdot \nu_j^l(n)(h)\Big)\cdot \nu_{i+l-1}^m(n+l-1)(g) \\ \label{eq:310}
&=
\Big(\kappa_{h^*(i+l-1)}^{n+l-1}(m)(\kappa_j^n(l)(f))\Big)\cdot \Big(\kappa_{i+l-1}^{n+l-1}(m)(\nu_j^l(n)(h))\Big)\cdot \Big(\nu_{i+l-1}^m(n+l-1)(g)\Big),
\end{align}
where $h^*=\nu_j^l(n)(h)$. Now, \eqref{eq:309} and \eqref{eq:310} are equal by (\ref{eqt:6.5}) and since 
$$
h^*(i+l-1) = \nu_j^l(n)(h)(i+l-1) = i+l-1\quad\mbox{and}
$$
$$
\kappa_{i+l-1}^{n+l-1}(m)(\nu_j^l(n)(h)) = \nu_j^l(n+m-1)(h).
$$
The last equalities hold by (\ref{eqt:6.2}) because $j<i$ and therefore $i+l-1\ge j+l$.

\bigskip
\noindent{\it Case 2.} Suppose now that $j\geq i+m$. Again $g^*(j)=j$ and we have that
\begin{align*}
    \kappa_j^{n+m-1}(l)\circ \kappa_i^n(m) &= \kappa_{i}^{n+l-1}(m)\circ \kappa_{j-m+1}^n(l),\\[2mm]
   \kappa_j^{n+m-1}(l)\circ \nu_i^m(n) &= \nu_i^m(n+l-1)
\end{align*}
by (\ref{eqt:6.1}) and (\ref{eqt:6.2}). The first equality holds because $j\ge i+m$ and therefore $i<j-m+1$.
Then \eqref{eq:309} becomes
\[\Big(\kappa_{i}^{n+l-1}(m)(\kappa_{j-m+1}^n(l)(f))\Big)\cdot \Big(\nu_i^m(n+l-1)(g)\Big)\cdot \Big(\nu_j^l(n+m-1)(h)\Big). \]
On the other hand, using (v), we get
    \begin{align}\notag 
        &(f\circ_{j-m+1} h)\circ_{i} g \\ \notag
&:= (\kappa_{j-m+1}^n(l)(f)\cdot \nu_{j-m+1}^l(n)(h))\circ_i g \\ \notag  &:= \kappa_{i}^{n+l-1}(m)\Big(\kappa_{j-m+1}^n(l)(f)\cdot \nu_{j-m+1}^l(n)(h)\Big)\cdot \nu_i^m(n+l-1)(g) \\ \label{eq:311}
&=
\Big(\kappa_{h^*(i)}^{n+l-1}(m)(\kappa_{j-m+1}^n(l)(f))\Big)\!\cdot \!\Big(\kappa_{i}^{n+l-1}(m)(\nu_{j-m+1}^l(n)(h))\Big)\!\cdot \!\Big(\nu_i^m(n+l-1)(g)\Big),
\end{align}
where $h^*=\nu_{j-m+1}^l(n)(h)$. But \eqref{eq:309} and \eqref{eq:311} are equal by (\ref{eqt:6.5}) since $h^*(i) = i$ and
$$
\kappa_i^{n+l-1}(m)(\nu_{j-m+1}^l(n)(h)) = \nu_j^l(n+m-1)(h).
$$
The last equality holds by (\ref{eqt:6.2}) because $j\ge i+m$ and therefore $i+m-1<j$.

\bigskip
\noindent {\it Case 3.} Suppose that $i\leq j<i+m$. In this case, $i\leq g^*(j)<i+m$ and we have that
\begin{align*}
\kappa_{g^*(j)}^{n+m-1}(l)\circ \kappa_i^n(m) &= \kappa_i^n(m+l-1)
\\
\kappa_j^{n+m-1}(l)\circ \nu_i^m(n) &= \nu_i^{m+l-1}(n)\circ \kappa_{j-i+1}^m(l).
\end{align*}
by (\ref{eqt:6.1}) and (\ref{eqt:6.2}). Then \eqref{eq:309} becomes
\[\Big(\kappa_i^n(m+l-1)(f)\Big)\cdot \Big(\nu_i^{m+l-1}(n)( \kappa_{j-i+1}^m(l)(g))\Big)\cdot \Big(\nu_j^l(n+m-1)(h)\Big).
\]
On the other hand, using (v), we obtain
\begin{align} \notag
&f\circ_i(g\circ_{j-i+1} h) \\ \notag
&:=f\circ_i (\kappa_{j-i+1}^m(l)(g)\cdot \nu_{j-i+1}^l(m)(h)) \\ \notag
&:= \kappa_i^n(m+l-1)(f)\cdot \nu_i^{m+l-1}(n)\Big(\kappa_{j-i+1}^m(l)(g)\cdot \nu_{j-i+1}^l(m)(h)\Big)\\
&=\Big(\kappa_i^n(m+l-1)(f)\Big)\cdot \Big(\nu_i^{m+l-1}(n)(\kappa_{j-i+1}^m(l)(g))\Big)\cdot \Big(\nu_i^{m+l-1}(n)(\nu_{j-i+1}^l(m)(h))\Big).\label{eq:312}
\end{align}
Again \eqref{eq:309} and \eqref{eq:312} are equal since, by (\ref{eqt:6.3}), 
$$
\nu_i^{m+l-1}(n)(\nu_{j-i+1}^l(m)(h))=\nu_j^l(n+m-1)(h).
$$

Regarding Condition (2'), it is clear that from axiom (ii) we obtain a levelwise homomorphism $\pi$ to the family of symmetric groups as in the definition of general action operad. From axiom (ii+) we obtain an operad map $\pi$ fulfilling Condition (\ref{cond:ac:2}) in the definition of action operad.

Finally, in order to prove the product rule for the action operad, i.e.\ Condition (\ref{cond:ac:3}). Let $f,f'\in G_n$ and $g,g'\in G_m$. Then we have that
\begin{align*}
(f\circ_{f'(i)} g)\cdot (f'\circ_i g')
&:= \Big(\kappa_{f'(i)}^n(m)(f)\cdot \nu_{f'(i)}^m(n)(g)\Big)\cdot \Big(\kappa_i^n(m)(f')\cdot \nu_i^m(n)(g')\Big) \\
&= \Big(\kappa_{f'(i)}^n(m)(f)\cdot  \kappa_i^n(m)(f')\Big)\cdot \Big(\nu_{i}^m(n)(g)\cdot \nu_i^m(n)(g')\Big) \\
&= \Big(\kappa_{i}^n(m)(f\cdot f')\Big)\cdot \Big(\nu_{i}^m(n)(g\cdot g')\Big) \\
&=(f\cdot f')\circ_i (g\cdot g'),
\end{align*}
where we have applied: Condition (viii) in the definition of an operadic cloning system (second line), and  (\ref{eqt:6.4}) together with the fact that $\nu_i^m(n)$ is a group homomorphism (third line).
\end{proof}

\begin{thm}\label{thm:BijectionActOpdsAndOperadicCS}
There is an explicit bijective correspondence between action operads and restricted operadic cloning systems. There is an explicit bijective correspondence between general action operads and operadic cloning systems.      
\end{thm}
\begin{proof} The constructions given in Theorem~\ref{thm:OperadtoCS} and Proposition~\ref{prop:CloningtoOper} are inverses of each other. Let $(\G_\bullet,\iota,\zeta,\kappa,\pi)$ be a restricted operadic cloning system, and let $(\hat{\G}_\bullet,\hat{\iota},\hat{\zeta},\hat{\kappa},\hat{\pi})$ be the restricted operadic cloning system that arises from the action operad associated to it. It is clear that $\hat{\G}_\bullet = \G_\bullet$ and that $\hat{\pi} = \pi$. For the maps $\iota$ we have that
\begin{align*}
\hat{\iota}_n(g) &:= e_2\circ_1 g = \kappa_1^2(n)(e_2)\cdot \nu^n_1(2)(g) = e_{n+1}\cdot \iota_n(1)(g) = \iota_n(g),
\end{align*}
and in the same way, we obtain that $\hat{\zeta}_n(g)=\zeta_n(g)$. For the cloning maps, we have that
\begin{align*}
\hat{\kappa}_j^n(g) &:= g\circ_j e_2 = \kappa_j^n(2)(g)\cdot \nu_j^2(n)(e_2) = \kappa_j^n(2)(g)\cdot e_{n+1} = \kappa_j^n(g).
\end{align*}

Conversely, let $(\hat{\G}_\bullet,\hat{\pi},\{\hat{\circ}_i\}_i,\hat{\id})$ be the action operad associated to the restricted operadic cloning system obtained from an action operad $(\G_\bullet,\pi,\{\circ_i\}_i,\id)$. It is clear that $\hat{\G}_\bullet = \G_\bullet$, $\hat{\pi}=\pi$ and $\hat{\id}=\id$. Regarding the partial composition products, let $f\in \G_n$ and $g\in\G_m$. Then we have that
\begin{align*}
f\operatorname*{\hat{\circ}}\nolimits_i g &:= \kappa_i^n(m)(f)\cdot \nu^m_i(n)(g) \\ 
&= (f\circ_i e_m)\cdot (e_n\circ_i g) \\
&= (f\cdot e_n)\circ_i (e_m \cdot g) \\
&= f\circ_i g,
\end{align*}
where we have used Remark~\ref{rem:properties} for the second equality and the product rule of the action operad for the third. 

The second statement is proven analogously.
\end{proof}

\begin{rem} As mentioned in Section \ref{section:cloning_systems}, most of the known examples of cloning systems are bilateral and even operadic. Consequently, for all those examples, we have identified a general action operad structure on them. However, a natural question arises: is it possible to interpret the remaining Example~\ref{examp:DirectPowers} and Example~\ref{examp:UpperTriangularMatrices}, in operadic terms? The answer is yes. Both examples have in common that their structural map $\pi$ is trivial (see Remark \ref{rem: pi trivial for action operad}) and that the only condition that fails for them to be operadic bilateral cloning systems is axiom (viii).

Repeating the construction of Proposition \ref{prop:CloningtoOper} for these examples, and noticing that axiom (viii) is only applied to show the compatibility of the group multiplication with the $\circ_i$ products, one gets the structure of a non-symmetric operad \emph{on sets} (see Definition \ref{def:actionoperad}). The reason why they are not operadic is that an operadic cloning system with $\pi$ trivial has an associated non-symmetric operad \emph{on groups} via Theorem \ref{thm:BijectionActOpdsAndOperadicCS}.
\end{rem}

\section{Cloning systems as crossed groups}\label{section:crossed}

Action operads have a close relationship with crossed simplicial groups \cite{Zhang, Yoshida}. In this section we review this relationship, and explain the relationship with cloning systems. As we will see, cloning systems satisfying (iv+) can be interpreted as ``crossed interval groups'', while cloning systems satisfying (iv+) without maps $\iota$ are the same as crossed simplicial groups.

Let $(\G_{\bullet},\pi,\iota,\kappa)$ be a cloning system. If we forget all the structure except the cloning maps, we can interpret the pair $(\G_\bullet,\kappa)$ as a diagram
$$
\begin{tikzcd}[ampersand replacement=\&]
\G_1
\ar[r, "\kappa_1" description] \& \G_2
\ar[r, "\kappa_1" description, shift left=2]\ar[r, "\kappa_2" description, shift right = 2] \& \G_3 
\ar[r, "\kappa_1" description, shift left=4]\ar[r, "\kappa_2" description]\ar[r, "\kappa_3" description, shift right=4] \& \;\cdots 
\end{tikzcd}
$$
which resembles the diagram representing a simplicial set $X$, but considering only the degeneracy operations
$$
\begin{tikzcd}[ampersand replacement=\&]
X[0] \ar[r, "s_0" description] \& X[1] \ar[r, "s_0" description, shift left=2]\ar[r, "s_1"' description, shift right=2] \& X[2] \ar[r, "s_0" description, shift left=3]\ar[r, "s_1" description] \ar[r, "s_2"' description, shift right=3] \& X[3] \ar[r, "s_0"description, shift left=5]\ar[r, "s_1" description, shift left=2]\ar[r, "s_2" description, shift right=2]\ar[r, "s_3"' description,shift right=5] \& X[4] \ar[r, "s_0" description,shift left=6]\ar[r, "s_1" description, shift left=3]\ar[r, "s_2" description]\ar[r, "s_3" description, shift right=3]\ar[r, "s_4"' description, shift right=6] \& \;\cdots 
\end{tikzcd}
$$
Let us formalise this viewpoint. Let $[n]=\{0,1,\ldots,n\}$ be the finite ordinal of cardinality $n+1$.
\begin{defn}
    The \emph{simplicial category} $\Delta$ has objects the non-empty finite ordinals and morphisms the order-preserving maps between them. There are two distinguished families of morphisms
\begin{align*}
    \delta^n_j\colon &[n-1]\longrightarrow [n], & \sigma^n_j\colon &[n+1]\longrightarrow [n], & 0\leq j\leq n,
\end{align*}
called \emph{cofaces} and \emph{codegeneracies}, respectively, and defined as
\begin{align*}
    \delta_j^n(i)&=\begin{cases}
        i & \text{if $i<j$} \\
        i+1 & \text{if $i\geq j$}
    \end{cases}
&
    \sigma_j^n(i)&=\begin{cases} i &\text{if $i\leq j$} \\ i-1 & \text{if $i>j$}
\end{cases}
\end{align*}
These morphisms satisfy the following relations, called \emph{cosimplicial identities}

\begin{align*}\label{eq:007}
\delta_j\circ \delta_i &= \delta_i\circ \delta_{j-1} \qquad i<j \\
\sigma_j\circ \delta_i &= \begin{cases}
    \delta_i\circ \sigma_{j-1} & i<j \\
    \id & i=j,j+1 \\
    \delta_{i-1}\circ \sigma_j & i>j+1
\end{cases}
\\
\sigma_j\circ \sigma_i &= \sigma_{j-1}\circ \sigma_i \qquad i\leq j
\end{align*}
and generate all the morphisms in the category in the sense that every morphism can be expressed as a composite of cofaces and codegeneracies. 
\end{defn}
\begin{defn}
Let $\Delta_{\surj}\subset \Delta$ be the category of non-empty finite ordinals with order-preserving surjections between them. It is the subcategory of $\Delta$ generated by the maps $\sigma_j^n$.
\end{defn}
\begin{defn}
    Let $\Set$ be the category of sets and functions. A~\emph{simplicial set} $X$ is a functor $X\colon\Delta^\op\to \Set$. The maps $X(\delta^n_i)$ are called \emph{faces} and denoted $d^n_i$ and the maps $X(\sigma^n_i)$ are called \emph{degeneracies} and denoted $s^n_i$. A \emph{demi-simplicial set}\footnote{It seems these objects have not been considered previously in the literature. Since simplicial sets without degeneracies are called \emph{semi-simplicial sets}, we have chosen to replace the prefix \emph{semi-} by its French version \emph{demi-}. Additionally, the first letter of each prefix specifies whether we are removing degeneracies or face maps.} $X$ is a functor $X\colon\Delta^\op_\surj\to \Set$.
\end{defn}

Faces and degeneracies satisfy the so-called \emph{simplicial identities} which are the dual of the cosimplicial identities mentioned above. In the particular case of the relation involving only degeneracies, we get the identity
\begin{equation}\label{eq:deg_rel}
\sigma_j\circ \sigma_i = \sigma_{j-1}\circ \sigma_i \qquad i\leq j
\end{equation}

In what follows, when we refer to conditions in Roman numerals, we mean the conditions satisfied by cloning systems and bilateral cloning systems from Definition~\ref{defn:CloningSystem} and Definition~\ref{defn:NCCloningSystem}.

Observe that \eqref{eq:deg_rel} is exactly the same relation satisfied by the maps $\kappa^n_j$ with the extra condition (iv+) except that the subindices are shifted by one (the first map is $\kappa_1$ not $\kappa_0$). Therefore we have the following consequence. 
\begin{lem} A demi-simplicial set with values in groups is the same as a family of groups 
 $\G_{\bullet}=\{\G_n\}_{n\geq 1}$  equipped with  maps $\kappa=\{\kappa^n_{j}\colon\G_{n}\to\G_{n+1}\}_{n\ge 1,\,1\leq j\leq n}$ satisfying condition \emph{(iv+)}. $\hfill\qed$
\end{lem}
\subsection{The demi-simplicial set of symmetric groups} Let us build the demi-simplicial set associated to the cloning system of symmetric groups. Note that an order-preserving surjection $f\colon [n]\to [m]$ is completely determined by the cardinality of the preimages $f^{-1}(0),\ldots,f^{-1}(m)$. Every permutation $h\in \Sigma_{m+1}$ of $[m]$ induces a permutation of the preimages $f^{-1}(0),\ldots,f^{-1}(m)$, and therefore a block permutation on $[n]$, that we denote by $\Phi(f)(h)\in \Sigma_{n+1}$. This defines a functor $\Phi\colon \Delta_\surj^\op\to \Set$ with $\Phi([n]) = \Sigma_{n+1}$.

\begin{defn}\label{defn: Sigma acting on Delta surj} Define an action 
 $$
    \begin{tikzcd}[ampersand replacement=\&]
    \Sigma_{m+1}\times \Delta_\surj([n],[m])\ar[rr] \&\& \Delta_{\surj}([n],[m]) \\[-6mm]
    (g,f) \ar[rr, mapsto] \&\& f_g
    \end{tikzcd},
 $$
where $f_g\colon [n]\to [m]$ is the unique order-preserving surjection such that the cardinality of ${f_g}^{-1}(g(i))$ is equal to the cardinality of $f^{-1}(i)$. 
\end{defn}
Observe now that the map $\Phi(f)\colon \Sigma_{m+1}\to \Sigma_{n+1}$ is not a group homomorphism but satisfies that 
\begin{equation*}\label{eq:610}
    \Phi(f)(g\cdot g') = \Phi(f_{g'})(g)\cdot\Phi(f)(g'),
\end{equation*}
which applied to degeneracies is the same as condition (v) in a cloning system
\begin{equation}\label{eq:611}\Phi(\sigma_i)(g\cdot g') = \Phi(\sigma_{g'(i)})(g)\cdot \Phi(\sigma_i)(g').
\end{equation}
The functor $\Phi$, defined on order-presering surjections between ordinals, can be extended to any order-preserving map between ordinals. Indeed, since every map factors uniquely as an order-preserving surjection followed by an order-preserving injection, it is enough to define it on injections, which we can do as follows. Note that to give an order-preserving injective function $f\colon [n]\to [m]$ is equivalent to specify the complement of the image $A = [m]\smallsetminus f([n])$.

\begin{defn}\label{defn: Sigma acting on Delta inj} Define an action 
 $$
    \begin{tikzcd}[ampersand replacement=\&]
    \Sigma_{m+1}\times \Delta_\inj([n],[m])\ar[rr] \&\& \Delta_{\inj}([n],[m]) \\[-6mm]
    (g,f) \ar[rr, mapsto] \&\& f_g
    \end{tikzcd},
 $$
 where $f_g\colon [n]\to [m]$ is the unique order-preserving injection for which the following holds $[m]\smallsetminus f_g([n]) = [m]\smallsetminus g(f([m]))$. 
\end{defn}

Observe that, by the unique factorization of any map in $\Delta$ mentioned above, the actions given in Definitions \ref{defn: Sigma acting on Delta surj}, \ref{defn: Sigma acting on Delta inj} combine to yield an action
 $$
    \begin{tikzcd}[ampersand replacement=\&]
    \Sigma_{m+1}\times \Delta([n],[m])\ar[rr] \&\& \Delta([n],[m]) \\[-6mm]
    (g,f) \ar[rr, mapsto] \&\& f_g
    \end{tikzcd}.
 $$
Thus, for a permutation $g\in \Sigma_{m+1}$  of $[m]$, we set $\Phi(f)(g) = f_g^{-1}\circ g\circ f$. This defines a functor $\Phi\colon \Delta^\op\to \Set$, and hence a simplicial set. Observe that if $f\colon [n]\to [m]$ is any map, then the map $\Phi(f)\colon \Sigma_{m+1}\to \Sigma_{n+1}$ is not a group homomorphism but satisfies  
\begin{equation*}\label{eq:613}
    \Phi(f)(g\cdot g') = \Phi(f_{g'})(g)\cdot\Phi(f)(g').
\end{equation*}

\subsection{Crossed simplicial groups and crossed interval groups}

The following definition of crossed simplicial groups is a characterization taken from~\cite[Proposition 1.7]{FL}, where $\Phi$ denotes the simplicial set constructed above.
\begin{defn}
    A \emph{crossed simplicial group} is a simplicial set $\Psi\colon \Delta^\op\to \Set$ with objectwise values on groups together with a levelwise group morphism $\pi\colon \Psi\to \Phi$ such that 
    \[
    \Big(\big(\pi\circ \Psi(\sigma_i)\big)(g)\Big)(j) = \Big(\big(\Phi(\sigma_i)\circ \pi\big)(g)\Big)(j)
    \]
    for all $j\neq i,i+1$ and
\begin{align*}
    \Psi(\sigma_i)(g\cdot g') &= \Psi(\sigma_{\pi(g')(i)})(g)\cdot\Psi(\sigma_i)(g'), \\ 
    \Psi(\delta_i)(g\cdot g') &= \Psi(\delta_{\pi(g')(i)})(g)\cdot\Psi(\delta_i)(g').
    \end{align*} 
    A \emph{crossed demi-simplicial group} is defined in the same way, replacing the category $\Delta$ by the category $\Delta_\surj$. 
\end{defn}

The following result follows immediately from the previous discussion and the definition of crossed demi-simplicial group.
\begin{lem} Let $\G_{\bullet}=\{\G_n\}_{n\geq 1}$ be a family of groups, $\pi=\{\pi_n\colon \G_n\to\Sigma_n\}_{n\ge 1}$ a family of group homomorphisms and $\kappa=\{\kappa^n_{j}\colon\G_{n}\to\G_{n+1}\}_{n\ge 1,\,1\leq j\leq n}$ a family of maps. A triple $(\G_\bullet,\pi,\kappa)$ satisfying conditions {\rm (ii)}, {\rm (iv+)} and {\rm (v)} is the same as a crossed demi-simplicial group. $\hfill\qed$
\end{lem}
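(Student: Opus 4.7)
The plan is to unwind the definition of a crossed demi-simplicial group term by term and match each piece with one of the four conditions in the hypothesis, making use of the previous lemma to handle the underlying demi-simplicial structure.

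First, I would recall the index shift that is implicit in the previous lemma: the identification between the cloning data $(\G_\bullet,\kappa)$ and a demi-simplicial object is given by $\Psi([n])=\G_{n+1}$ and $s^n_j=\kappa^{n+1}_{j+1}$ for $0\le j\le n$, since the simplicial identity $s_j s_i=s_i s_{j+1}$ for $i\le j$ unpacks, after the substitution $l=i+1$, $k=j+1$, into the single relation $\kappa_l\circ\kappa_k=\kappa_{k+1}\circ\kappa_l$ for $l\le k$; the case $l<k$ is exactly (iv) and the case $l=k$ is exactly (iv+). Thus (iv) and (iv+) together encode precisely the demi-simplicial relations.

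Second, I would check that the homomorphisms $\pi_n$ assemble into a natural transformation $\pi\colon \Psi\to\Phi$, where $\Phi$ is the demi-simplicial object from Example \ref{ex:demi_symm} with $\Phi([n])=\Sigma_{n+1}$ and degeneracies given by the block-duplication maps $c^{n+1}_{j+1}$. The naturality square for $s^n_j$ asks exactly that $\pi_{n+2}\circ\kappa^{n+1}_{j+1}=c^{n+1}_{j+1}\circ\pi_{n+1}$, which is condition (ii). That each component $\pi([n])=\pi_{n+1}$ is a group homomorphism is part of the hypothesis.

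Third, the remaining axiom for a crossed demi-simplicial group is the crossed rule for degeneracies, which, once translated through the identifications above, reads $\kappa^n_j(g\cdot h)=\kappa^n_{\pi_n(h)(j)}(g)\cdot\kappa^n_j(h)$ for $g,h\in\G_n$; this is literally condition (v). For the converse direction I would simply reverse each of these identifications: starting from a crossed demi-simplicial group $(\Psi,\pi)$, set $\G_n=\Psi([n-1])$, $\pi_n=\pi([n-1])$, and $\kappa^n_j=s^{n-1}_{j-1}$, and read off (ii) from naturality of $\pi$, (iv) and (iv+) from the demi-simplicial identity (split according to strict inequality vs.\ equality of indices), and (v) from the crossed rule. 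The only genuine point to keep straight is the index shift coming from the passage between the algebraic convention $\G_n$ (indexed by cardinality $n$) and the categorical convention $[n]=\{0,\ldots,n\}$; once that bookkeeping is fixed the correspondence is a direct term-by-term translation, so there is no substantive obstacle beyond this careful reindexing.
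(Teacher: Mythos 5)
Your proposal is correct and matches the paper's intent exactly: the paper gives no written proof (it declares the lemma to follow immediately from the preceding discussion), and your term-by-term unwinding — (iv)+(iv+) for the demi-simplicial identities via the previous lemma, (ii) for naturality of $\pi$, (v) for the crossed degeneracy rule, with the $[n]\leftrightarrow\G_{n+1}$ index shift — is precisely that discussion made explicit.
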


Now we would like to incorporate the homomorphisms $\iota$ into the picture. We will incorporate the morphisms $\zeta$ at the same time.

\begin{defn} For each $n\geq 1$, the $n^{\text{th}}$ \emph{interval} is the set $\langle n\rangle = \{-\infty,1,\ldots,n,\infty\}$. The \emph{interval category} $\I$ is the category whose objects are all intervals and whose morphisms are order-preserving maps that preserve $-\infty$ and $\infty$. The subcategory $\I_\surj$ has the same objects as $\I$ and its morphisms are the order-preserving surjective maps that preserve $-\infty$ and $\infty$.\end{defn}
The interval category can be introduced as the Joyal dual of the simplicial category \cite{Joyal} or as the image of the faithful embedding $\alpha\colon \I\to \Delta$ that sends $\langle n\rangle = \{-\infty,1,\ldots,n,\infty\}$ to $[n+1] = \{0,1,\ldots,n+1\}$, and an interval map yields a simplicial map by interpreting $-\infty$ as $0$ and $\infty$ as $n+1$. Here we are interested in the second description, and we will blur the difference between maps in $\I$ and their images under the embedding $\alpha$. Therefore, from now on we will write simply $\Phi$ for the crossed interval group $\Phi\circ \alpha$.
\begin{defn}
    An \emph{inert crossed interval group} is a presheaf $\Psi\colon \I^\op\to \Set$ with values on groups together with a levelwise group morphism $\pi\colon \Psi\to \Phi$ such that 
    \[
    \Big(\big(\pi\circ \Psi(\sigma_i)\big)(g)\Big)(j) = \Big(\big(\Phi(\sigma_i)\circ \pi\big)(g)\Big)(j)
    \]
    for all $j\neq i,i+1$ and
    \begin{align*}\label{eq:product_cig}
    \Psi(\sigma_i)(g\cdot g') &= \Psi(\sigma_{\pi(g')(i)})(g)\cdot\Psi(\sigma_i)(g'), \\ 
    \Psi(\delta_i)(g\cdot g') &= \Psi(\delta_{\pi(g')(i)})(g)\cdot\Psi(\delta_i)(g').
    \end{align*} 
     An \emph{inert crossed demi-interval group} is defined in the same way, by replacing $\I$ by $\I_\surj$.
\end{defn}
Observe now that if $\Phi$ is an inert crossed demi-interval group, the maps $\Phi(s_0)$ and $\Phi(s_{n+1})$ are group homomorphisms. In fact, we can interpret bilateral cloning systems as inert crossed demi-interval groups by setting $\kappa^n_i = s^{n+1}_i$, $\zeta_n = s^{n+1}_0$ and $\iota_n = s^{n+1}_{n+1}$. Thus, we have the following result.
\begin{prop}\label{prop: Bilateral CS to inert crossed demi-interval gp} A quintuple $(\G_\bullet,\pi,\iota,\zeta,\kappa)$ satisfying all the conditions of a bilateral cloning system is the same as an inert crossed demi-interval group. 
\end{prop}

\begin{rem}
    Every action operad with constants gives rise to a crossed simplicial group. This is carefully developed in \cite[2.4]{Zhang}. This construction is different from ours, and does not allow to recover the composition products of the operad structure from the simplicial structure.
\end{rem}

\begin{rem} Crossed interval groups have been studied in \cite{Batanin-Markl} and \cite{Yoshida}. The adjective \emph{inert} corresponds to any of the two equivalent properties defined in \cite[Lemma 4.3]{Yoshida}. From our viewpoint, the adjective \emph{inert} implies that the composite $\pi_n\circ \Psi(\sigma_i)$ is the identity permutation when $i=0, n$, and therefore $\Psi(\sigma_i)$ is a group homomorphism in those cases. This condition is automatic from our definition of the crossed interval group $\Phi$. 

In that paper, Yoshida studied the relation between crossed interval groups and action operads with constants (operations of arity 0). He established that every action operad with constants determines a crossed interval group, and found three properties that characterise the crossed interval groups that come from an action operad with constants: operadicness, tameness and ``factoring through the symmetric group''. Under the above lemma, tameness corresponds to property (ix), while operadicness corresponds to property (viii) plus being inert and ``factoring through the symmetric group'' corresponds to (ii+). Dropping the constants, one obtains the vertical maps of the diagram of the introduction. 

We note that Examples 3.1 and 3.4 in \cite{Zaremsky}, which do not come from an action operad, do come from an inert crossed demi-interval group (with trivial homomorphisms $\pi$). 
\end{rem}

\begin{rem}\label{rem:crossed_injectivity}
    In their initial definition of cloning system, Witzel and Zaremsky request the homomorphisms $\iota$ to be injective. This is not necessary for their constructions, but it is often satisfied in practice. We think the following is a possible reason for this: In a crossed interval group $\Psi$ the maps $\Psi(\sigma_i)$ are always injective by the simplicial identity $\sigma_i\circ \delta_i = \id$. Similarly, in an action operad \emph{with constants} the maps $g\mapsto g\circ_i e_2$ and $g\mapsto e_2\circ_i g$ are injective, because the maps $g\mapsto g\circ_i e_0$ and $g\mapsto e_0\circ_i g$ are left inverses of them. 
    
    Most of the operadic cloning systems considered correspond to action operads without constants that can be obtained by forgetting the constants from an action operad with constants. For example, the action operad of braid groups comes from an action operad with constants: The group of constants $\Br_0$ is the trivial group and if $g\in \Br_{n}$ is a braid, then $g \circ_i e_0\in \Br_{n-1}$ is the braid obtained from $g$ by forgetting the $i$-th strand. 
\end{rem}


\section{Cloning systems and PROs}\label{section:props}

In this section, we present another viewpoint for cloning systems in terms of PROs (product categories) that will be used in a forthcoming piece of work about the construction of Thompson groups. We begin by recalling the definition of PRO.
\begin{defn} A \emph{PRO} $\Ofrak$ is a quadruple $(\Ofrak,\odot,\otimes,\id)$ where:
\begin{itemize} 
    \item $\Ofrak$ is a collection of sets $\Big(\Ofrak\brbinom{n}{m}\Big)_{n,m\geq 1}$,
    \item $\odot$ is an associative product (\emph{vertical product}) with two-sided unit $\id$
    $$
    \odot\colon \Ofrak\brbinom{n}{t}\times \Ofrak\brbinom{s}{n}\longrightarrow \Ofrak\brbinom{s}{t},\quad  \id_{n}\in\Ofrak\brbinom{n}{n},
    $$
    \item $\otimes$ is an associative product (\emph{horizontal product})
    $$
    \otimes\colon \Ofrak\brbinom{n_1}{m_1}\times \Ofrak\brbinom{n_2}{m_2}\longrightarrow \Ofrak\brbinom{n_1+n_2}{m_1+m_2}.
    $$
\end{itemize}
Moreover, the vertical and horizontal product are required to satisfy the \emph{interchange law}
$$
(f\otimes g)\odot (p\otimes q)=(f\odot p)\otimes (g\odot q),
$$
whenever it makes sense, and  $\id_{n}\otimes\id_{m}=\id_{n+m}$ for all $n,m\geq 1$.
\end{defn}

More abstractly, a \emph{PRO} $\mathfrak{O}$ is a strict (non unital) monoidal category equipped with a strict monoidal functor $(\mathbb{N}_{\geq 1},+)\to (\mathfrak{O},\otimes)$ which is an isomorphism on objects. We do not consider monoidal units, that is, units for $\otimes$, because they will create nullary operations that we are avoiding.

\begin{examp}[Symmetric groups]
The family of symmetric groups $\{\Sigma_n\}_{n\ge 1}$ yields a very simple PRO by setting
$$
\Sigma\brbinom{n}{m}= \begin{cases}
    \Sigma_n & \mbox{if $n=m$}, \\
    \emptyset & \mbox{if $n\neq m$}.
\end{cases}
$$
The vertical product $\odot$ is just the group structure of the symmetric groups and the horizontal product $\otimes$ is the block product of permutations (see Figure ~\ref{fig:HorizontalProductSigma}).
\begin{figure}[htp]
    \centering
    \includegraphics[width=6cm]{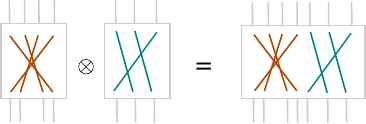}
    \caption{Block product of $(1\;4)(2\;3)$ and $(1\;2\;3)$.}
    \label{fig:HorizontalProductSigma}
    \end{figure}\\
We denote by $\Sigma$ the PRO of symmetric groups. Additionally, we consider that $\Sigma$ is equipped with \emph{cloning maps} for any $n$-tuple of positive integers $(m_1,\dots,m_n)$, 
$
c(m_1,\dots,m_n)\colon \Sigma\brbinom{n}{n}\to \Sigma\brbinom{m_1+\dots+m_n}{m_1+\dots+m_n},
$ 
obtained by replacing the $i$-th strand by $m_i$ strands for all $1\leq i\leq n$.
\end{examp}

\begin{examp}[Signed symmetric groups]
The family of signed symmetric groups $\{\Sigma_{n}^{\pm}\}_{n\ge 1}$ provides a slightly more complicated PRO. First, define 
$$
\Sigma^{\pm}\brbinom{n}{m}= \begin{cases}
    \Sigma_{n}^{\pm} & \mbox{if $n=m$}, \\
    \emptyset & \mbox{if $n\neq m$}.
\end{cases}
$$
The vertical product $\odot$ is just the group structure of the signed symmetric groups and the horizontal product $\otimes$ is the block product of signed permutations, which is defined analogously to the block product of ordinary permutations.
We denote by $\Sigma^{\pm}$ the PRO of signed symmetric groups. Additionally, we consider that $\Sigma^{\pm}$ is equipped with \emph{cloning maps} for any $n$-tuple of positive integers $(m_1,\dots,m_n)$, 
$
c(m_1,\dots,m_n)\colon \Sigma^{\pm}\brbinom{n}{n}\to \Sigma^{\pm}\brbinom{m_1+\dots+m_n}{m_1+\dots+m_n},
$ 
obtained by ``plumbing'' $m_i$ strands in the $i$-th component for $1\leq i\leq n$ subject to the rules: given $g\in \Sigma^{\pm}$, 
\begin{itemize}
\item if $g(i)$ is positive, the new $m_i$ strands in $c(m_1,\dots,m_n)(g)$ are plumbed with the identity permutation;
\item if $g(i)$ is negative, the new $m_i$ strands are plumbed with the permutation 
$$
\left(\begin{matrix}
1 & 2 & \cdots & m_i-1 & m_i\\
m_i & m_i-1 & \cdots & 2 & 1
\end{matrix}\right).
$$
\end{itemize}
\begin{figure}[h]
    \centering
    \includegraphics{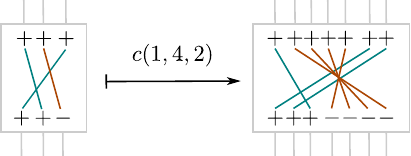}
    \caption{Cloning map on $\Sigma^{\pm}$.}
    \label{fig:KappaSigned}
    \end{figure}
\end{examp}

We will also need the concept of a \emph{morphism of PROs} $f\colon \Ofrak\to\Ofrak'$, which consists of maps $f_{n,m}\colon\Ofrak\brbinom{n}{m}\to \Ofrak'\brbinom{n}{m}$ for every $n,m\ge 1$ that preserve the PRO structure.

Next, we define two special types of PROs, by adding some extra structure, which will turn out to be equivalent to operadic cloning systems and their restricted counterparts. 

\begin{defn}
A \textit{cloning PRO} consists of the following data:
\begin{itemize}
    \item a PRO $(\G,\odot,\otimes,\id)$ such that the vertical product on $\G\brbinom{n}{n}$ admits inverses, that is, $\G\brbinom{n}{n}$ is a group for every $n$, and $\G\brbinom{n}{m}=\emptyset$ if $n\neq m$, 
    \item a morphism of PROs $\pi\colon\G\to\Sigma^{\pm}$,
    \item cloning maps 
    $\kappa(\underline{m})\colon \G\brbinom{n}{n}\rightarrow\G\brbinom{m_1+\dots+m_n}{m_1+\dots+m_n}$ for every $n\geq 1$ and every $n$-tuple $\underline{m}=(m_1,\dots,m_n)$ with $m_i\geq 1$, satisfying $\kappa(1,\stackrel{(n)}{\dots}, 1)=\id_{\G_n}$, and  the associativity condition
    $$
    \kappa(\underline{r}^1\sqcup\cdots\sqcup\underline{r}^{n})\circ\kappa(\underline{m})=\kappa\Big(\sum_{j_1}r^1_{j_1},\dots,\sum_{j_n}r^n_{j_n}\Big) 
    ,
    $$
    where $\underline{r}^i=(r^i_1,\dots,r^i_{m_i})$ and  
    $
    \underline{r}^1\sqcup\cdots\sqcup\underline{r}^n=(r^1_1,\dots,r^1_{m_1},\dots,r^n_1,\dots,r^n_{m_n}).
    $
\end{itemize}
Moreover, the PRO structure is compatible with $\pi$ and $\kappa$ as stated by the following conditions:
\begin{enumerate}
    \item $\pi$ commutes with  $\kappa$: $\pi\circ\kappa(\underline{m})=c(\underline{m})\circ\pi$.
     \item Identities and $\kappa$: $\kappa(\underline{m})(\id_{n})=\id_{m_1+\cdots+m_n}$.
    \item Horizontal composition and $\kappa$:
    $$\kappa(\underline{m})(f)\otimes \kappa(\underline{n})(g)= \kappa(\underline{m}\sqcup\underline{n})(f\otimes g).$$
    \item Vertical composition and $\kappa$:
    $$\kappa(\underline{m})(h\odot g)=\kappa(\underline{m}_{\pi(g)})(h)\odot \kappa(\underline{m})(g),$$ where  $\underline{m}_{\pi(g)}=(m_{\vert\pi(g)(1)\vert},\dots,m_{\vert\pi(g)(n)\vert})$. 
    \item[($\star$)] Twisted interchange law: $$\kappa(\underline{m})(f)\odot (g_{1}\otimes \cdots\otimes g_{n})=(g_{\vert\pi(f)(1)\vert}\otimes \cdots \otimes g_{\vert\pi(f)(n)\vert})\odot \kappa(\underline{m})(f).$$
\end{enumerate}

A \emph{morphism of cloning PROs} $f\colon (\G,\odot,\otimes,\id,\pi,\kappa)\to (\G',\odot',\otimes',\id',\pi',\kappa') $ is a morphism of PROs over $\Sigma^{\pm}$ which is compatible with the cloning maps, i.e.\ $f\circ \kappa(\underline{m})=\kappa(\underline{m})\circ f$ for any $\underline{m}$.
\end{defn}
\begin{defn}
    A cloning PRO $(\G,\odot,\otimes, \id,\pi,\kappa)$ is said to be \emph{restricted} if the morphism of cloning PROs $\pi\colon \G\to \Sigma^{\pm}$ factors through the cloning PRO of symmetric groups $\Sigma$, which sits inside $\Sigma^{\pm}$ as positive permutations.
\end{defn}

After these definitions, we prove that every (restricted) operadic cloning system gives rise to a (restricted) cloning PRO. More concretely: 

\begin{prop}\label{prop:NCCStoPRO} A (restricted) operadic cloning system $(\G_{\bullet},\iota,\zeta,\kappa,\pi)$ defines a (restricted) cloning PRO  $(\G,\odot,\otimes,\id,\pi,\kappa)$ in a functorial way.
\end{prop}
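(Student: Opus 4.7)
The strategy is to build the PRO directly from the bilateral cloning system data: the vertical product is group multiplication in each $\G_n$, the horizontal product is defined via the embeddings $\iota$ and $\zeta$, and the multi-cloning maps $\kappa(\underline{m})$ are obtained by iterating the partial cloning maps $\kappa^n_j$. Theorem~\ref{thm:main2} provides a conceptual shortcut, since these formulas coincide with the operadic multi-compositions $f\mapsto f\circ(e_{m_1},\ldots,e_{m_n})$, but the verification can be carried out entirely within the cloning system language.

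First, I would set $\G\brbinom{n}{n}:=\G_n$, $\G\brbinom{n}{m}:=\emptyset$ for $n\neq m$, $\id_n:=e_n$, and take $\odot$ to be group multiplication. For the horizontal product I would define
$$
f\otimes g\;:=\;\iota_n(m)(f)\cdot\zeta_m(n)(g)\in\G_{n+m},\qquad f\in\G_n,\ g\in\G_m.
$$
Associativity of $\otimes$ reduces to the telescoping identities for iterated $\iota$ and $\zeta$ built into their definition, together with the commutation of iterated $\iota$ with iterated $\zeta$, which follows by induction from axiom~(vi). The identity $\id_n\otimes\id_m=\id_{n+m}$ is immediate from the homomorphism property of $\iota$ and $\zeta$. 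The interchange law is the main checkpoint at this stage: expanding the left hand side yields
$$
(f\otimes g)\odot(p\otimes q)=\iota_n(m)(f)\cdot\zeta_m(n)(g)\cdot\iota_n(m)(p)\cdot\zeta_m(n)(q),
$$
and axiom~(ix) allows us to swap the two middle factors, after which the homomorphism property of $\iota$ and $\zeta$ collapses the product to $\iota_n(m)(f\cdot p)\cdot\zeta_m(n)(g\cdot q)=(f\odot p)\otimes(g\odot q)$. The PRO morphism $\pi\colon\G\to\Sigma$ is induced by the levelwise homomorphisms $\pi_n$; its compatibility with the horizontal product follows from axioms~(i) and (i'), and with the cloning maps from~(ii).

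Second, the multi-cloning map $\kappa(\underline{m})\colon\G_n\to\G_{m_1+\cdots+m_n}$ is defined inductively by composing the partial cloning maps in a fixed order, for instance by successively blowing up strand $n$, then $n-1$, and so on (equivalently, in operadic language from Theorem~\ref{thm:main2}, by iterated substitution of the identities $e_{m_i}$). The normalization $\kappa(1,\ldots,1)=\id_{\G_n}$ is immediate, and the associativity relation
$$
\kappa(\underline{r}^1\sqcup\cdots\sqcup\underline{r}^n)\circ\kappa(\underline{m})=\kappa\Bigl(\textstyle\sum_j r^1_j,\ldots,\sum_j r^n_j\Bigr)
$$
follows from an induction using axioms~(iii), (iii'), (iv) and (iv+). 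The compatibility conditions are then verified one by one: (1) follows from (ii); (2) from $\kappa^n_j(e_n)=e_{n+1}$, which is a consequence of applying~(v) to $e_n\cdot e_n=e_n$; (3) from combining the definitions of $\otimes$ and $\kappa(\underline{m})$ with (iii) and (iii'); (4) is the multi-cloning generalization of~(v); and the twisted interchange law~$(\star)$ is the multi-index version of~(viii), again obtained by induction.

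The main obstacle will be the index bookkeeping required to prove the associativity of $\kappa$ and condition~$(\star)$ directly: these encode operad substitution and its compatibility with products, so Theorem~\ref{thm:main2} makes them conceptually immediate, but a self-contained verification in the cloning language demands careful tracking of how the indices of the partial cloning maps shift under composition. Functoriality is automatic, since a morphism of bilateral cloning systems intertwines $\iota$, $\zeta$, $\kappa$, and $\pi$ by definition, and hence preserves every derived piece of the PRO structure.
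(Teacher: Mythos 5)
Your proposal is correct and follows essentially the same route as the paper: identical definitions of $\odot$, $\otimes=\iota(m)(\cdot)\cdot\zeta(n)(\cdot)$, $\id_n=e_n$ and $\kappa(\underline{m})=\kappa_1(m_1)\circ\cdots\circ\kappa_n(m_n)$, with the same axioms invoked at each verification step (in particular (vi) for associativity of $\otimes$, (ix) for the interchange law, (v) for compatibility with $\odot$, and (viii) for the twisted interchange law). The only difference is one of detail: the paper carries out the case analysis for the compatibility of $\kappa$ with $\otimes$ (which also needs (vii), (vii') and the observation that $\zeta(n)(g)$ fixes $\{1,\dots,n\}$ pointwise) and writes out the $n=2$ instance of the twisted interchange law explicitly, whereas you defer these to the acknowledged index bookkeeping.
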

\begin{proof}
Set $\G\brbinom{n}{n}=\G_n$, the vertical product $\odot$ to be the group multiplication and $\id_n=e_n$. Define the cloning maps
$$
\kappa(\underline{m})=\kappa(m_1,\dots,m_n)=\kappa_1(m_1)\circ\cdots \circ \kappa_{n}(m_n),
$$
and the horizontal product $\otimes$
$$
\begin{tikzcd}
\otimes\colon \G\brbinom{n}{n}\times \G\brbinom{m}{m}\ar[rr,"\iota(m)\times \zeta(n)"] && \G\brbinom{n+m}{n+m}\times\G\brbinom{n+m}{n+m}\ar[r, "\cdot"] & \G\brbinom{n+m}{n+m} 
\end{tikzcd}.
$$

We now check that $(\G,\odot,\otimes,\id,\pi,\kappa)$ is a (restricted) cloning PRO by verifying all the axioms. Let $f\in\G_n$, $g\in\G_m$ and $h\in \G_r$.
\begin{itemize}
    \item Associativity of $\otimes$: 
\begin{align*}
(f\otimes g)\otimes h &=  \iota(r)\big(\iota(m)(f)\cdot \zeta(n)(g)\big)\cdot \zeta(n+m)(h)\\
&= \iota(r+m)(f)\cdot \iota(r)(\zeta(n)(g))\cdot \zeta(n+m)(h)\\
&= f\otimes (g\otimes h),
\end{align*}
by condition (vi) of bilateral cloning systems and since $\iota$ and $\zeta$ are homomorphisms. Also, $\id_n\otimes\id_m=\id_{n+m}$ since $\iota(m)$ and $\zeta(n)$ preserve identities and $e_{n+m}$ is idempotent.
\item Interchange law for $\odot$ and $\otimes$: 
\begin{align*}
(f\otimes g)\odot (p\otimes q) &=  \Big(\iota(m)(f)\cdot \zeta(n)(g)\Big)\cdot\Big(\iota(m)(p)\cdot\zeta(n)(q)\Big)\\
&= \iota(m)(f)\cdot \iota(m)(p)\cdot \zeta(n)(g)\cdot\zeta(n)(q)\\
&= (f\odot p)\otimes (g\odot q),
\end{align*}
by condition (ix) and since $\iota$ and $\zeta$ are homomorphisms.

\item $\pi\colon \G\to \Sigma^{\pm}$ (resp.\ $\pi\colon \G\to \Sigma$) is a morphism of PROs, since $\pi$ commutes with $\iota,\ \zeta$ and it consists of group homomorphisms (see Remark \ref{rem: Condition ii' for cloning systems}).

\item Associativity conditions for $\kappa$ hold by conditions (iv) and (iv+) (axioms for compositions of $\kappa$'s for operadic/bilateral cloning systems).

\item Commutation of $\kappa$ and $\pi$ holds by the analogous axiom for (restricted) operadic cloning systems, i.e.\ (ii') (resp.\ (ii+)).

\item $\kappa$ acting on identities. Since
$
\kappa_j(r)(e_m)=\kappa_j(r)(e_m)\cdot\kappa_j(r)(e_m)
$
(see the observation after (\ref{eqt:6.5})),
it follows that $\kappa_j(r)(e_m)=e_{m+r-1}$.

\item Relation between $\kappa$ and $\odot$. By iterating condition (v) we get 
\begin{multline*}
\kappa(\underline{m})(f\odot p) =  \kappa_1(m_1)\circ\cdots\circ \kappa_n(m_n)(f\cdot p)\\
=  \Big(\kappa_{p(1)}(m_1)\circ\cdots\circ\kappa_{p(n)}(m_n)(f)\Big)\cdot \Big(\kappa_1(m_1)\circ\cdots\circ\kappa_n(m_n)(p)\Big)\\
= \kappa(\underline{m}_{p})(f)\odot \kappa(\underline{m})(p),
\end{multline*}
where we have adopted Notation \ref{notat: Dropping pi when understood}.

\item Relation between $\kappa$ and $\otimes$. Let us analyze what happens for $\kappa_j(r)$ instead of a general $\kappa(\underline{m}^1\sqcup\underline{m}^2)$, because the general case will follow by combining these simple cases by conditions (iv) and (iv+). 

\begin{itemize}
    \item[\textbf{Case 1:}] ($1\leq j\leq n$). 
    \begin{align*}
 \kappa_j(r)(f\otimes g) &=  \kappa_j(r)\big(\iota(m)(f)\cdot\zeta(n)(g)\big)\\
 &=  \kappa_{\zeta(n)(g)(j)}(r)(\iota(m)(f))\cdot \kappa_j(r)(\zeta(n)(g))\\
 &=\kappa_j(r)(\iota(m)(f))\cdot \kappa_j(r)(\zeta(n)(g))\\
 &=\iota(m)\big(\kappa_j(r)(f)\big)\cdot \zeta(r+n)(g)\\
 &= \kappa_j(r)(f)\otimes g.
 \end{align*}
We have applied conditions (v), (iii), (vii') and that $\zeta(n)(g)$ is constant over $\{1,\dots, n\}$.
    \item[\textbf{Case 2:}] ($n<j\leq m+n$).
    \begin{align*}
 \kappa_j(r)(f\otimes g) 
 &=  \kappa_{n+g(j)}(r)(\iota(m)(f))\cdot \kappa_j(r)(\zeta(n)(g))\\
 &=\iota(m+r)(f)\cdot \zeta(n)(\kappa_j(r)(g))\\
 &= f\otimes \kappa_j(r)(g).
 \end{align*}
We have applied conditions (v), (iii'), (vii) and that $\zeta(n)(g)(j)$ in this case is $n+g(j)$ which is strictly bigger than $n$ (recall that we are always assuming that elements in $\G_m$ or $\Sigma^{\pm}_m$ act on $\{1,\dots, m\}$ through their canonical map into $\Sigma_m$).
\end{itemize}

\item Twisted interchange law. Let us just check the case $f\in \G_2$ ($n=2$) and $g_i\in \G_{m_i}$, since the general case follows from the same ideas.
 \begin{align*}
\kappa(m_1,m_2)(f)\odot (g_1\otimes g_2) 
 &= (\kappa_1(m_1)\circ\kappa_2(m_2))(f)\cdot \Big(\iota(m_2)(g_1)\cdot\zeta(m_1)(g_2)\Big) \\
 &\overset{(a)}{=} \Big(\kappa_1(m_1)\big(\kappa_2(m_2)(f)\big)\cdot \nu_{1}(m_2)(g_1)\Big)\cdot \zeta(m_1)(g_2)\\
 &\overset{(b)}{=} \Big(\nu_{f(1)}(m_2)(g_1)\cdot \kappa_{1}(m_1)\big(\kappa_{2}(m_2)(f)\big)\Big)\cdot \zeta(m_1)(g_2)\\
  &\overset{(c)}{=} \nu_{f(1)}(m_2)(g_1)\cdot \kappa_{1}(m_1)\Big(\kappa_{2}(m_2)(f)\cdot \nu_2(1)(g_2)\Big)\\
  &\overset{(d)}{=} \nu_{f(1)}(m_2)(g_1)\cdot \kappa_{1}(m_1)\Big(\nu_{f(2)}(1)(g_2)\cdot\kappa_{2}(m_2)(f)\Big)\\
   &\overset{(e)}{=} \nu_{f(1)}(m_2)(g_1)\cdot \nu_{f(2)}(m_1)(g_2)\cdot\kappa(m_1,m_2)(f)\\
 &\overset{\phantom{(e)}}{=} (g_{f(1)}\otimes g_{f(2)})\odot \kappa(m_1,m_2)(f).
 \end{align*}
In the above equalities, we have applied: $(a)$ definition of $\nu$; $(b)$ condition (viii); $(c)$ conditions (v), (vii') and definition of $\nu$; $(d)$ condition (viii); and $(e)$ conditions (v), (vii) and (iv+). \hfill\qedhere
\end{itemize}
\end{proof}

Next, we prove that (restricted) operadic cloning systems come from (restricted) cloning PROs: 

\begin{prop}\label{prop:PROsToNCCS} A (restricted) cloning PRO $(\G,\odot,\otimes,\id,\pi,\kappa)$ determines a (restricted) operadic cloning system $(\G_{\bullet},\iota,\zeta,\kappa,\pi)$ in a functorial way.
\end{prop}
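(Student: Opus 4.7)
The plan is to reverse the construction in Proposition~\ref{prop:NCCStoPRO}. Given a cloning PRO $(\G,\odot,\otimes,\id,\pi,\kappa)$, I set $\G_n = \G\brbinom{n}{n}$, take $\pi_n$ from the PRO morphism, define the cloning maps $\kappa^n_j(g)=\kappa(1,\ldots,2,\ldots,1)(g)$ with the $2$ in the $j$-th slot, and define
\[
\iota_n(g) = g \otimes \id_1, \qquad \zeta_n(g) = \id_1 \otimes g.
\]
The iterated maps $\iota_n(r)$, $\zeta_n(l)$ and general cloning $\kappa^n_j(m)$ then agree with ``tensoring on the right/left with $\id_r$'' and with the PRO operation $\kappa(1,\ldots,m,\ldots,1)$, respectively, by the associativity axioms.

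First I would verify that $\iota_n$ and $\zeta_n$ are group homomorphisms: this follows from the interchange law together with $\id_1 \odot \id_1 = \id_1$, since $(g\cdot h)\otimes \id_1 = (g\otimes \id_1)\odot(h\otimes \id_1)$, and likewise for $\zeta_n$. Injectivity is automatic once the PRO is assumed nontrivial in the usual sense, but since the statement of Definition~\ref{defn:NCCloningSystem} does not strictly require injectivity (see the remark after that definition), no extra work is needed. Next I would check conditions (i)--(v) of Definition~\ref{defn:CloningSystem}: conditions (i) and (ii) follow from $\pi$ being a PRO morphism compatible with $\kappa$; condition (iii) follows from the $\kappa$-with-$\otimes$ axiom (3) applied to $\kappa_j^n(g)\otimes \id_1 = \kappa(1,\ldots,2,\ldots,1,1)(g\otimes \id_1)$; condition (iv) is an instance of the associativity axiom for $\kappa$; and condition (v) is exactly axiom (4), the $\kappa$-with-$\odot$ compatibility.

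Then I would verify the additional bilateral axioms. Axioms (i'), (iii'), (vii') are obtained from (i), (iii), (vii) by replacing ``tensor with $\id_1$ on the right'' with ``tensor on the left'' and applying the same arguments; (iv+) is another instance of the $\kappa$-associativity axiom. For (vi), the identity $\iota_{n+1}\circ\zeta_n = \zeta_{n+1}\circ\iota_n$ reduces to the associativity of $\otimes$: both sides equal $\id_1 \otimes g \otimes \id_1$. For (vii), $\iota_{n+1}\iota_n(g)= g\otimes \id_1\otimes \id_1 = g\otimes \id_2 = \kappa(1,\ldots,1,2)(g\otimes \id_1) = \kappa^{n+1}_{n+1}\iota_n(g)$, using axiom (2) ($\kappa$ on identities) and axiom (3) ($\kappa$ with $\otimes$); (vii') is symmetric. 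Condition (ix) is the interchange law applied to $(g\otimes \id_m)\odot(\id_n\otimes h) = g\otimes h = (\id_n\otimes h)\odot(g\otimes \id_m)$, noting that $\iota_n(m)(g)=g\otimes \id_m$ and $\zeta_m(n)(h)=\id_n\otimes h$. The main obstacle is (viii), which is exactly what the twisted interchange law ($\star$) is designed to deliver: writing $\kappa^n_j(m)(g) = \kappa(1,\ldots,m,\ldots,1)(g)$ and decomposing $\nu^m_j(n)(h) = \id_{j-1}\otimes h \otimes \id_{n-j}$, the axiom ($\star$) applied to $f=g$ and the horizontal factors $(\id_1,\ldots,h,\ldots,\id_1)$ (with $h$ in position $j$) gives precisely (viii), because $\pi(g)(j)$ is the index of the permuted slot.

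Finally, to complete the bijective correspondence already established in Theorem~\ref{thm:main2}, I would observe that this construction is functorial (morphisms of cloning PROs in the obvious sense correspond to morphisms of bilateral cloning systems) and that it is inverse to the construction of Proposition~\ref{prop:NCCStoPRO}: starting with a bilateral cloning system and forming its PRO, the reconstructed $\iota$, $\zeta$, $\kappa$, $\pi$ coincide with the originals since $g\otimes \id_1 = \iota(1)(g)\cdot \zeta(n)(\id_1) = \iota_n(g)\cdot e_{n+1} = \iota_n(g)$, and similarly for the others; conversely, starting with a cloning PRO and forming its bilateral cloning system, the horizontal product reconstructed as $\iota(m)(f)\cdot \zeta(n)(g) = (f\otimes \id_m)\odot(\id_n\otimes g)$ equals $f\otimes g$ by the interchange law.
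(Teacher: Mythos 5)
Your proposal is correct and follows essentially the same route as the paper: identical definitions of $\iota$, $\zeta$, $\kappa^n_j$ from the PRO data, with the homomorphism property and axiom (ix) from the interchange law, (vi) from associativity of $\otimes$, (iii)/(vii) and their primed versions from the $\kappa$-with-$\otimes$ and $\kappa$-on-identities axioms, and (viii) from the twisted interchange law with $\nu^m_j(n)(h)=\id_{j-1}\otimes h\otimes\id_{n-j}$. The closing paragraph on the two constructions being mutually inverse belongs to the subsequent theorem rather than to this proposition, but it is consistent with the paper's argument there.
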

\begin{proof}
Set $\G_n=\G\brbinom{n}{n}$, the product $\cdot=\odot$ and $e_n=\id_n$. Define
$$
\kappa_j(m)=\kappa(1,\dots,1,m^{(j)},1,\dots,1), \quad 
\iota(f)=f\otimes \id \quad \text{and} \quad \zeta(f)=\id\otimes f.
$$

The only non-straightforward axioms to check are the following:
\begin{itemize}
    \item $\iota$ is a homomorphism of groups. We have that,
    $$
    \iota(f\cdot g)= (f\odot g)\otimes \id_1= (f\otimes \id_1)\odot(g\otimes \id_1)= \iota(f)\cdot\iota(g)
    $$
    by the interchange law and since $\id_1$ is idempotent. A similar argument proves that $\zeta$ is also a homomorphism of groups.
    \item Axioms (iii) and (vii). We have that
    $$
    \kappa_j(r)\big(\iota(m)(f)\big)=\left\lbrace 
    \begin{array}{lcl}
    \kappa_j(r)(f)\otimes \id_m=\iota(m)\big(\kappa_j(r)(f)\big) && \text{if }1\leq j\leq n,\\[4mm]
   f\otimes \id_{m+r-1}=\iota(m+r-1)(f) && \text{otherwise},
    \end{array}
    \right.
    $$
    where $f\in\G\brbinom{n}{n}$. The corresponding conditions (iii') and (vii') for $\zeta$ can be deduced in the same way.
    \item Axiom (vi). We have that 
    $$
    \zeta(m)\big(\iota(n)(f)\big)=\id_m\otimes f\otimes \id_n= \iota(n)\big(\zeta(m)(f)\big)
    $$
    by the associativity of the horizontal product $\otimes$.
    \item Axiom (viii). We have that
    \begin{align*}
 \kappa_j(m)(f)\cdot \nu_j(n)(g)&= \kappa(1,\dots,m^{(j)},\dots, 1)(f)\odot(\id_{j-1}\otimes\, g\otimes \id_{n-j+1})\\
 &= (\id_{f(j)-1}\otimes\, g\otimes \id_{n-f(j)+1})\odot \kappa(1,\dots,m^{(j)},\dots, 1)(f)\\ 
 &=\nu_{f(j)}(n)(g)\cdot \kappa_j(m)(f),
 \end{align*}
 by the twisted interchange law and the identity $\id_{r}\otimes\id_{s}=\id_{r+s}$.
    \item Axiom (ix). We have that
     \begin{align*}
 \iota(m)(f)\cdot \zeta(n)(g)&= (f\otimes \id_m)\odot(\id_n\otimes\, g)\\
 &= (f\odot \id_n)\otimes (\id_m\odot\, g)\\ 
 &= (\id_n\odot\, f)\otimes (g\odot \id_m)\\
  &= (\id_n\otimes\, g)\odot (f\otimes \id_m)\\
 &=\zeta(n)(g)\cdot \iota(m)(f),
 \end{align*}
    by the interchange law and since $\id_r$ is an identity for $(\G\brbinom{r}{r},\odot)$. \hfill\qedhere
\end{itemize}
\end{proof}

Finally, we have the following theorem, which ties together the results in this section:

\begin{thm}\label{thm:Bijection PROs and CSs} The functorial constructions of Propositions \ref{prop:NCCStoPRO} and \ref{prop:PROsToNCCS} induce isomorphisms of categories
$$
\begin{Bmatrix}
\text{(restricted) operadic}\\
\text{cloning systems}
\end{Bmatrix} \cong 
\begin{Bmatrix}
\text{(restricted)}\\
\text{cloning PROs}
\end{Bmatrix}.
$$
\end{thm}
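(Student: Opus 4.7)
The plan is to verify that the two constructions from Proposition~\ref{prop:NCCStoPRO} and Proposition~\ref{prop:PROsToNCCS} are mutually inverse on objects, and then to argue functoriality on morphisms. Both tasks are essentially mechanical once the compatibility axioms are in place; the real work has already been done in the two preceding propositions, and what remains is a tracking exercise.

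For the round trip starting from a bilateral cloning system $(\G_\bullet,\iota,\zeta,\kappa,\pi)$, I would first note that the underlying groups, the homomorphism $\pi$, and the vertical product $\odot$ are tautologically preserved. The map $\hat{\iota}$ produced by Proposition~\ref{prop:PROsToNCCS} is $\hat{\iota}_n(f) = f\otimes \id_1 = \iota_n(f)\cdot \zeta_{n+1}(e_1) = \iota_n(f)$, since $\zeta$ is a homomorphism and $e_1$ is the identity; similarly $\hat{\zeta}_n(g) = \id_1\otimes g = \zeta_n(g)$. For the cloning maps, the definition $\hat{\kappa}_j(m) = \kappa(1,\dots,m^{(j)},\dots,1)$ unfolds by the associativity prescription for $\kappa(\underline{m})$ in the constructed PRO as $\kappa_1(1)\circ\cdots\circ\kappa_j(m)\circ\cdots\circ\kappa_n(1) = \kappa_j(m)$, using that $\kappa_i(1)$ is the identity map.

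For the opposite round trip, starting from a cloning PRO $(\G,\odot,\otimes,\id,\pi,\kappa)$, the groups $\G\brbinom{n}{n}$ and the projection $\pi$ are again preserved on the nose, as is the vertical product $\hat{\odot}=\odot$. The reconstructed horizontal product is
\[
f\hat{\otimes} g = \hat{\iota}(m)(f)\hat{\odot}\hat{\zeta}(n)(g) = (f\otimes \id_m)\odot(\id_n\otimes g) = (f\odot \id_n)\otimes(\id_m\odot g) = f\otimes g,
\]
by the interchange law and unitality. The reconstructed cloning maps $\hat{\kappa}(\underline{m}) = \hat{\kappa}_1(m_1)\circ\cdots\circ\hat{\kappa}_n(m_n)$ coincide with $\kappa(\underline{m})$ because each factor is by definition $\kappa(1,\dots,m_i^{(i)},\dots,1)$ and the associativity axiom for $\kappa$ collapses their composite precisely to $\kappa(m_1,\dots,m_n)$.

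It then remains to verify functoriality. A morphism $\phi\colon \G\to \G'$ of bilateral cloning systems is a family of group homomorphisms compatible with $\iota$, $\zeta$, $\kappa$ and $\pi$; since the horizontal product and the general cloning maps of the associated PRO are constructed out of these ingredients, $\phi$ automatically becomes a morphism of cloning PROs. Conversely, a morphism of cloning PROs is by definition a morphism of PROs commuting with $\pi$ and $\kappa$, so it commutes with $\iota=(-)\otimes\id$, with $\zeta=\id\otimes(-)$, and with each $\kappa_j(m) = \kappa(1,\dots,m^{(j)},\dots,1)$, yielding a morphism of the associated bilateral cloning systems. The two assignments are inverse on morphisms for the same reason they are inverse on objects.

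The step I expect to be the main obstacle, though still routine, is verifying the identity $\hat{\kappa}(\underline{m}) = \kappa(\underline{m})$ on the PRO side and the identity $\hat{\kappa}_j(m) = \kappa_j(m)$ on the cloning system side, since both require a careful bookkeeping of the associativity axiom for compositions of cloning maps (conditions (iv) and (iv+) in the cloning system language, and the stated associativity in the PRO language). Once these identifications are in place, compatibility with horizontal products and with morphisms follows from the corresponding axioms without further difficulty.
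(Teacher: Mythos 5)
Your proposal is correct and follows essentially the same route as the paper: both verify that the two round-trip composites are the identity via the computations $g\otimes\id=\iota(g)\cdot\zeta(n)(e_1)=\iota(g)$, $\hat\kappa_j=\kappa(1,\dots,m^{(j)},\dots,1)$ collapsing by the associativity axiom, and $f\,\hat\otimes\,g=(f\otimes\id_m)\odot(\id_n\otimes g)=f\otimes g$ via the interchange law. Your added remarks on functoriality of the assignments on morphisms go slightly beyond what the paper writes out explicitly, but do not change the argument (only note the small indexing slip: $\zeta_{n+1}(e_1)$ should be the $n$-fold iterate $\zeta(n)$ applied to $e_1\in\G_1$).
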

\begin{proof}
Let us first check that
$$
\begin{Bmatrix}
\text{(restricted) operadic}\\
\text{cloning systems}
\end{Bmatrix} \to 
\begin{Bmatrix}
\text{(restricted)}\\
\text{cloning PROs}
\end{Bmatrix}\to 
\begin{Bmatrix}
\text{(restricted) operadic}\\
\text{cloning systems}
\end{Bmatrix}.
$$
is the identity. Let $(\G_{\bullet},\iota,\zeta,\kappa,\pi)$ be a (restricted) operadic cloning system, and let $(\hat{\G}_{\bullet},\hat{\iota},\hat{\zeta},\hat{\kappa},\hat{\pi})$ be the image of the previous (restricted) operadic cloning system under the composition of functors. By definition $\hat{\G}=\G$ as groups and $\hat{\pi}=\pi$. For $\hat{\iota}$, we have that
\begin{align*}
\hat{\iota}(g) &= g\otimes \id = \iota(1)(g)\cdot \zeta(n)(e_1)=\iota(g)\cdot e_{n+1}=\iota(g).
\end{align*}
The maps $\zeta$ behave similarly, and
\begin{align*}
\hat{\kappa}_j(g) &= \kappa(1,\dots,2^{(j)},\dots,1)(g)=\big(\kappa_1(1)\circ\cdots\circ\kappa_j(2)\circ\cdots\circ\kappa_n(1)\big)(g)=\kappa_j(g).
\end{align*}

Conversely, let $(\G,\odot,\otimes,\id,\pi,\kappa)$ be a (restricted) cloning PRO and let the tuple  $(\hat{\G},\hat{\odot},\hat{\otimes},\hat{\id},\hat{\pi},\hat{\kappa})$ denote the image of it under the other composition of functors. It is clear that $\hat{\G} = \G$, $\hat{\odot}=\odot$, $\hat{\id}=\id$, $\hat{\pi}=\pi$ and $\hat{\kappa}=\kappa$. Finally, regarding the horizontal product we have that
\begin{align*}
f\,\hat{\otimes}\, g &= \iota(m)(f)\cdot \zeta(n)(g)= (f\otimes \id_m)\odot (\id_n\otimes g)\\ 
&= (f\odot \id_n)\otimes(\id_m \odot g) = f\otimes g. \qedhere
\end{align*}
\end{proof}

\bibliographystyle{aomalpha}
\bibliography{references.bib}

\end{document}